\newtheorem{theorem}[equation]{Theorem}
\newtheorem{lemma}[equation]{Lemma}
\newtheorem{corollary}[equation]{Corollary}
\theoremstyle{definition}
\newtheorem{definition}[equation]{Definition}
\theoremstyle{remark}
\newtheorem{remark}[equation]{Remark}
\numberwithin{equation}{section}
\newcommand{ \mr }{ \mathbb{R} }
\newcommand{ \R }{ \mathbb{R} }
\newcommand{\tphi}{{\tilde\phi}}
\newcommand{\loc}{{\operatorname{loc}}}
\renewcommand{\epsilon}{\varepsilon}
\renewcommand{\phi}{\varphi}
\renewcommand{\le}{\leqslant}
\renewcommand{\ge}{\geqslant}
\renewcommand{\leq}{\leqslant}
\renewcommand{\geq}{\geqslant}
\newcommand{\ainc}[1]{\hyperref[ainc]{{\normalfont(aInc){\ensuremath{_{#1}}}}}}
\newcommand{\adec}[1]{\hyperref[adec]{{\normalfont(aDec){\ensuremath{_{#1}}}}}}
\newcommand{\inc}[1]{\hyperref[inc]{{\normalfont(Inc){\ensuremath{_{#1}}}}}}
\newcommand{\dec}[1]{\hyperref[dec]{{\normalfont(Dec){\ensuremath{_{#1}}}}}}
\newcommand{\wMAe}[1]{\hyperref[wMAe]{{\normalfont(wMA){\ensuremath{_{#1}}}}}}
\begin{document}

\title{Higher integrability for parabolic systems with Orlicz  growth}

\author{Peter H\"ast\"o}

\address{Department of Mathematics and Statistics, FI-20014 University of Turku, Finland 
and
Department of Mathematics, FI-90014 University of Oulu, Finland}
\email{peter.hasto@oulu.fi}

\author{Jihoon Ok}
\address{Department of Applied Mathematics and the Institute of Natural Sciences, Kyung Hee University, Yongin 17104, Republic of Korea}
\email{jihoonok@khu.ac.kr}

\thanks{}

\subjclass[2010]{49N60; 35A15, 35B65, 35J62, 46E35}



\keywords{}

\begin{abstract}
We prove higher integrability of the spatial gradient of weak solutions to parabolic systems 
with $\phi$-growth, where $\phi=\phi(t)$ is a general Orlicz function. The parabolic 
systems need be neither degenerate nor singular. Our result is a generalized version of 
the one of J. Kinnunen and J. Lewis [Duke Math. J. 102 (2000), no. 2, 253--271] 
for the parabolic $p$-Laplace systems.
\end{abstract}

\maketitle


\section{Introduction}\label{sec1}

Higher integrability results for elliptic problems with Orlicz growth can be easily obtained from the ones for $p$-growth problems, hence they are well-known. On the other hand, higher integrability for parabolic problems with Orlicz growth is not simple and, as far as we know, no related result has been reported. The main difficulty is that  the lower- and the upper-bound of exponent of the 
Orlicz function $\phi$, which are denoted by $p$ and $q$ in this paper, may be too far away from each other to apply techniques used in the standard $p$-growth case. Specifically, in $p$-growth problems, known proofs use different techniques in the degenerate case ($p> 2$) and the singular case ($p<2$). However, in the general Orlicz setting, neither of these cases apply when $p<2<q$, since the problem 
has characteristics of both the singular and degenerate cases. 

We study regularity theory for second-order parabolic systems satisfying  a general growth condition of Orlicz type. Precisely,  we consider the following parabolic system:
\begin{equation}\label{maineq}
\partial_t u_i-\mathrm{div}(A_i(z,u,Du))=0\quad \text{in }\ \Omega_I=\Omega\times I\subset \R^n\times \R,\qquad i=1,\dots ,N,
\end{equation}
where $\Omega\subset\R^n$ ($n\geq 2$) is an open set, $I\subset\R$ is an interval, $z=(x,t)\in \Omega\times I$, $u=(u_1,\dots,u_N)\in \R^N$ and $Du$ is the spatial gradient of $u$, i.e., $Du=D_xu$. Here $A_i:\Omega_T\times \R^N\times \R^{Nn} \to \R^{n}$, $i=1,\dots,N$, satisfies
\begin{equation}\label{A-condition}
|A_i(z,u,\xi)| \leq \Lambda \frac{\phi(|\xi|)}{|\xi|}
\quad \text{and}\quad
\sum_{i=1}^N A_i(z,u,\xi)\cdot\xi_i  \geq \nu \frac{\phi(|\xi|)}{|\xi|} 
\end{equation}
for all $z\in\Omega_I$, $u\in \R^N$ and $\xi=(\xi_1,\dots,\xi_N)\in \R^{n}\times \dots\times \R^n$ and for some $0<\nu\le \Lambda$, and $\phi:[0,\infty)\to[0,\infty)$ is a weak $\Phi$-function which satisfies \ainc{p} and \adec{q} for some
\begin{equation}\label{pq-range}
\frac{2n}{n+2} < p  \leq q.
\end{equation}
We will introduce the definitions of weak $\Phi$-function, \ainc{} and \adec{} in the next section. We note that \ainc{p} and \adec{q} for some $1<p\leq q$ are equivalent to the $\nabla_2$  and $\Delta_2$ conditions, respectively, see \cite[Proposition~2.2.6]{HarH_book}. The lower bound $\frac{2n}{n+2}$ in \eqref{pq-range} is generally assumed in parabolic regularity theory, see \cite{DiBenedetto_book} and also \cite{KiLewis00}.

The prototype of \eqref{maineq} is the so-called parabolic $g$-Laplace system
\[
\partial_t u_i-\mathrm{div}\left(\frac{g(|Du|)}{|Du|}Du_i\right) =0,\qquad i=1,\dots,N.
\]
More generally, we may also consider coefficients:
\[
\partial_t u_i-\mathrm{div}\left(a(z,u)\frac{g(|Du|)}{|Du|}Du_i\right) =0,\quad i=1,\dots,N, 
\quad \text{where }\ 0<\nu\leq a(\cdot,\cdot)\leq \Lambda.
\]
Here, we may take $\phi(t)=\int_0^tg(s)ds$. In particular, when $g(t)=t^{p-1}$, 
this system  becomes the parabolic $p$-Laplace system.

The main result of this paper is to prove higher integrability of the gradient of a weak 
solution to the system \eqref{maineq} together with a reverse H\"older type estimate. 
The \textit{weak solution to \eqref{maineq}} 
with structure conditions \eqref{A-condition}--\eqref{pq-range}
is defined as a function $u\in L^\infty(I,L^2(\Omega,\R^N))\cap L^1(I,W^{1,1}(\Omega))$ 
with $
\phi(|Du|)\in L^1([0,T],L^1(\Omega))$ satisfying
\begin{equation}\label{weakform}
-\int_{\Omega_I} u_i \,\partial_t \zeta_i \, dz +\int_{\Omega_I} A_i(z,u,Du)\cdot D\zeta_i \,dz =0,\qquad i=1,\dots, N, 
\end{equation}
for all $\zeta=(\zeta_1,\dots,\zeta_N)\in C^\infty_0(\Omega_I,\R^N)$. 
We show that there exists a universal constant $\epsilon>0$ such that  
\[
\phi(|Du|)\in L^{1+\epsilon}_{\loc}(\Omega_I).
\]

Regularity theory for the parabolic $p$-Laplace systems, $p\neq 2$, was first systematically studied by 
DiBenedetto and Friedman, see \cite{DiBeFrie84,DiBeFrie85} and also the monographs \cite{DiBenedetto_book,DiBeGiaVe_book}. Later, $L^q$-regularity theory was established in \cite{AcerMin07, KiLewis00}. 
In particular, in \cite{KiLewis00}, Kinnunen and Lewis first proved higher integrability for 
parabolic $p$-Laplace systems. We further refer to \cite{Baroni13,Bogelein14,BoDuMin13,KuuMin12,KuuMin14,Misawa02} and related references for regularity results for parabolic $p$-Laplace systems.  

In the calculus of variations, partial differential equations with $p$-growth can be obtained as Euler-Lagrange equations of functionals with a $p$-growth condition that is related to the power function $t^p$. Hence we can naturally generalize $t^p$ to an Orlicz function, and a growth condition related to an Orlicz function is called the Orlicz-growth condition. Regularity results for elliptic equations with Orlicz growth, specifically $C^{\alpha}$- and $C^{1,\alpha}$-regularity, 
were first obtained by Lieberman \cite{Lieberman91}. 
Later, he generalized these results to parabolic systems with Orlicz growth \cite{Lieberman06}. 
We also refer to regularity results \cite{Baroni15,Cho18-1,CiaMa11,CiaMa14,DieEtt08,DieStroVer09,Verde11} and  \cite{BaLin17,Cho18-2,DieScharSchwa19,HwangLie15-1,HwangLie15-2,Yao19} for the 
elliptic and parabolic case with Orlicz growth, respectively. 

As mentioned above, we shall prove a higher integrability result for parabolic systems with Orlicz growth. The higher integrability is the most basic regularity property of weak solutions for elliptic/parabolic problems in divergence form, and is a crucial ingredient in studying regularity theory, see for instance \cite{Giusti_book}. It has been obtained first by Elcrat and Meyers  \cite{MeElcrat75} for elliptic systems with $p$-growth (see also \cite{Giaquinta_book,Stre80}) and by Giaquinta and Struwe \cite{GiaStru82} for parabolic systems with $2$-growth (i.e., $p=2$). But it was an open problem for about 20 years for parabolic problems with $p$-growth ($p \neq 2$), and then Kinnunen and Lewis obtained the result \cite{KiLewis00}.  We also refer to \cite{Parviainen08,Parviainen09-1,Parviainen09-2} for  global higher integrability for parabolic problems with $p$-growth and  \cite{BoSch12}, \cite{Bogelein08}, \cite{AnZhi05,BoDu11}, \cite{BoDuKi19} and \cite{BoDuKorSche19, GiaSchwar19} for higher integrability results for obstacle problems, higher order parabolic systems, parabolic systems with $p(x,t)$-growth, doubly nonlinear parabolic systems and porous medium systems, respectively.

Now let us state our higher integrability result for parabolic systems with Orlicz growth.  

\begin{theorem}\label{mainthm}
Let $\phi:[0,\infty)\to[0,\infty)$ be a weak $\Phi$-function satisfying \ainc{p} and \adec{q} 
with constant $L\geq 1$ and let $u$ be a 
local weak solution to \eqref{maineq} with structure conditions \eqref{A-condition}--\eqref{pq-range}.
There exists $\epsilon=\epsilon(n,N,p,q,L,\nu,\Lambda)>0$ such that $\phi(|Du|)\in L^{1+\epsilon}_{loc}(\Omega_I)$ with the following estimate: for any $Q_{4\rho}\Subset\Omega_I$,
\[
\fint_{Q_{\rho}} \phi(|Du|)^{1+\epsilon}\,dz \leq c \left[(\phi \circ \mathcal{D}^{-1})\bigg(\fint_{Q_{2\rho}}\phi(|Du|)\,dz\bigg)\right]^\epsilon\fint_{Q_{2\rho}}\phi(|Du|)\,dz
\]
for some $c=c(n,N,p,q,L,\nu,\Lambda)>0$, where 
\begin{equation}\label{D-def}
\mathcal{D}(t):=\min\{t^2,\phi(t)^{\frac{n+2}{2}}t^{-n}\}
\end{equation} 
and $\mathcal{D}^{-1}$ is the left-inverse of $\mathcal D$.
\end{theorem}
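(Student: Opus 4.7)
The strategy is to adapt the intrinsic cylinder method of Kinnunen and Lewis \cite{KiLewis00} to the Orlicz setting. For a scale $\lambda>0$ and a base point $z_0=(x_0,t_0)$, I would work with intrinsic parabolic cylinders
\[
Q_\rho^\lambda(z_0) := B_\rho(x_0) \times \Big(t_0 - \frac{\lambda^2}{\phi(\lambda)}\rho^2,\; t_0 + \frac{\lambda^2}{\phi(\lambda)}\rho^2\Big),
\]
which reduce to the familiar cylinders $B_\rho\times(t_0\pm\lambda^{2-p}\rho^2)$ in the case $\phi(t)=t^p$. On such cylinders, under an intrinsic coupling of the form $\mathcal{D}(\lambda)\approx\fint_{Q_\rho^\lambda}\phi(|Du|)\,dz$ with $\mathcal{D}$ from \eqref{D-def}, the system should behave essentially homogeneously at scale $\lambda$, and one can expect a scale-invariant reverse H\"older inequality.

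The first analytical ingredient is an Orlicz Caccioppoli inequality, obtained by testing the weak formulation \eqref{weakform} (after regularization via Steklov averages so that non-smooth test functions become admissible) against $\zeta^2\psi(t)(u-(u)_{B_{2\rho}})$. The structure conditions \eqref{A-condition} together with \ainc{p} and \adec{q} should yield simultaneous control of the energy term $\sup_t\fint_{B_\rho}|u-(u)_{B_{2\rho}}|^2\phi(\lambda)/(\lambda^2\rho^2)\,dx$ and of $\fint_{Q_\rho^\lambda}\phi(|Du|)\,dz$, in terms of $\fint_{Q_{2\rho}^\lambda}\phi(|u-(u)_{B_{2\rho}}|/\rho)\,dz$. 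The second ingredient is an Orlicz parabolic Sobolev--Poincar\'e inequality, derived by interpolating the resulting $L^\infty_tL^2_x$ energy bound with the spatial embedding $W^{1,1}_x\hookrightarrow L^{n/(n-1)}_x$. The two branches of $\mathcal{D}$ appear precisely here: the $t^2$ term corresponds to the $L^\infty_tL^2_x$ factor, while $\phi(t)^{(n+2)/2}t^{-n}$ reflects the Gagliardo--Nirenberg interpolation on the intrinsic cylinder, and both must be retained since $p<2<q$ forces the two behaviours to coexist.

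Combining these ingredients on intrinsic cylinders satisfying the coupling yields a reverse H\"older estimate of the form
\[
\fint_{Q_\rho^\lambda}\phi(|Du|)^{1+\sigma}\,dz \leq C\,\phi(\lambda)^{\sigma}\fint_{Q_{2\rho}^\lambda}\phi(|Du|)\,dz
\]
for some small $\sigma>0$. To transfer this to Euclidean parabolic cylinders, I would run a Calder\'on--Zygmund stopping-time argument: for each level $\lambda$ above a threshold determined by $\fint_{Q_{2\rho}}\phi(|Du|)$ and each Lebesgue point $z_0$ with $\phi(|Du|)(z_0)$ large, select the largest intrinsic radius at which the coupling $\mathcal{D}(\lambda)\approx\fint_{Q_r^\lambda}\phi(|Du|)$ just holds, extract a Vitali subfamily, and sum the intrinsic reverse H\"older estimates. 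A Gehring--Giaquinta--Modica type lemma then converts the resulting distributional estimate into the $L^{1+\epsilon}_{\loc}$ bound. The factor $[(\phi\circ\mathcal{D}^{-1})(\fint_{Q_{2\rho}}\phi(|Du|))]^\epsilon$ in the theorem is exactly $\phi(\lambda)^\epsilon$ for the stopping-time-selected $\lambda$, since by construction $\mathcal{D}(\lambda)\approx\fint_{Q_{2\rho}}\phi(|Du|)$.

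The principal obstacle I expect is the uniform handling of the two regimes. In the pure $p$-case one traditionally splits into $p\geq 2$ (degenerate) and $p<2$ (singular) and uses different interpolation exponents; here a single $\phi$ satisfying \eqref{pq-range} may display both behaviours at different gradient scales, so the Caccioppoli and Sobolev--Poincar\'e steps must be carried out once, in a single form whose dependence on $\lambda$ produces exactly the minimum structure of $\mathcal{D}$. Making the \ainc{p}, \adec{q} comparisons $\phi(s\lambda)\leq L\max\{s^p,s^q\}\phi(\lambda)$ interact cleanly with the parabolic interpolation without degrading the reverse H\"older structure will be the technical heart of the proof.
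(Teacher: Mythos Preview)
Your overall architecture---intrinsic cylinders, Caccioppoli, parabolic Sobolev--Poincar\'e, stopping-time/Vitali covering, then a Gehring-type level-set argument---matches the paper's. However, three concrete details are off and would derail the argument if followed literally.

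First, the intrinsic coupling on $Q_\rho^\lambda$ is $\phi(\lambda)\approx\fint_{Q_\rho^\lambda}\phi(|Du|)\,dz$, not $\mathcal{D}(\lambda)\approx\fint_{Q_\rho^\lambda}\phi(|Du|)\,dz$. The time scaling $|Q_\rho^\lambda|\approx\rho^{n+2}\lambda^2/\phi(\lambda)$ is chosen precisely so that the Caccioppoli terms balance when $|Du|\sim\lambda$; using $\mathcal{D}(\lambda)$ there would break this balance. Second, the reverse H\"older inequality that actually emerges from Caccioppoli plus Sobolev--Poincar\'e under that coupling has the form
\[
\fint_{Q_\rho^\lambda}\phi(|Du|)\,dz\le c\bigg(\fint_{Q_{4\rho}^\lambda}\phi(|Du|)^\theta\,dz\bigg)^{1/\theta},\qquad\theta<1,
\]
i.e.\ a \emph{lower} exponent on the right. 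The inequality you wrote, with $\phi(|Du|)^{1+\sigma}$ on the left, is essentially the conclusion of the Gehring step, not its input; it does not follow directly from Caccioppoli and interpolation. Third, $\mathcal{D}$ does not arise in the Sobolev--Poincar\'e step. It enters in the stopping-time construction when comparing intrinsic and Euclidean volumes: one sets $\lambda_0:=\mathcal{D}^{-1}\big(\fint_{Q_{2\rho}}\phi(|Du|)\,dz\big)$ so that for $\lambda$ above a fixed multiple of $\lambda_0$ the ratio $|Q_{2\rho}|/|Q_{r}^\lambda|$ forces $\fint_{Q_r^\lambda}\phi(|Du|)\le\phi(\lambda)$ at the initial radius, allowing a continuity argument to locate a stopping radius where equality holds. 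The two branches of $\mathcal{D}$ reflect whether $\phi_2(\lambda)\gtrless 1$ in this volume comparison, not two pieces of a Gagliardo--Nirenberg exponent. In particular, $\lambda_0$ is a threshold determined once by the Euclidean cylinder $Q_{2\rho}$, not a scale selected pointwise by the stopping time.
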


We remark that  when $\phi(t)=t^p$, we have  
$\mathcal{D}(t)=\min\{t^2,t^{\frac{p(n+2)}{2}-n}\}$
and so
\[
(\phi\circ \mathcal{D}^{-1})(t)
= \max\{t^{\frac{p}{2}},t^{\frac{2p}{p(n+2)-2n}}\}
\]
Therefore, our result exactly implies the known results for the $p$-growth case, see for instance \cite{BoDu11}.

We would like to introduce the novelties of our approach used in this paper. 
The main step is to obtain a reverse H\"older type inequality. 
In this step we cannot take advantage of the approach used in the $p$-growth case, 
which is why the higher integrability for parabolic problems with Orlicz growth has remained unsolved. 
The first issue is that techniques for $p>2$ and $p<2$ in the $p$-growth case are different
which is problematic in the Orlicz case. In this paper, we present a universal approach that is independent of whether the system \eqref{maineq} is degenerate, singular or neither.  The second problem is that the classical Gagliardo--Nirenberg interpolation inequality, which is an important ingredient in the $p$-growth case, is not applicable to the Orlicz setting. In order to overcome this problem, we derive an interpolation inequality for the Orlicz case, see Lemma~\ref{lem:GNOrlicz}.  The remaining part follows the approach used in \cite{KiLewis00} with modifications for the Orlicz setting
using recent tools from \cite{HarH_book}.

Our paper is organized as follow. In the next section, Section \ref{sec2}, we introduce notation, 
Orlicz functions and derive an interpolation inequality. In Section \ref{sec3}, we obtain 
a reverse H\"older inequality. Finally in the last section, Section \ref{sec4}, we prove 
the main result, Theorem~\ref{mainthm}.


\section{Preliminaries}\label{sec2}

\subsection{Notation}

For $w=(y,\tau)\in \mr^n\times \mr$ we denote the usual parabolic cylinder by  
\[
Q_r(w):=B_r(y)\times (\tau-r^2,\tau+r^2),
\]
where $B_r(y)$ is the open ball in $\R^n$ with center $y$ and radius $r$, and the intrinsic parabolic cylinder (with function $\phi$) by
\[
Q^{\lambda}_{r}(w):=B_r(y)\times \Lambda^\lambda_r(\tau)
\quad
\text{where }\ \Lambda^\lambda_r(\tau):= (\tau-\tfrac{r^2}{\phi_2(\lambda)},
\tau+\tfrac{r^2}{\phi_2(\lambda)}),
\]
and, for the function $\phi:[0,\infty)\to[0,\infty)$, we define
$$
\phi_1(t):=\frac{\phi(t)}{t}
\quad\text{and}\quad
\phi_2(t):=\frac{\phi(t)}{t^2}.
$$

Let $f,g:[0,\infty)\to [0,\infty)$. The function $f$ is said to be \textit{almost increasing}  if there exists $L\ge 1$ such that $f(t)\leq Lf(s)$  for all $0<t<s<\infty$. If $L=1$ we say $f$ is increasing.  
\textit{Almost decreasing} and decreasing are defined analogously. 
We say that $f$ and $g$ are equivalent, $f\approx g$ if there exists $L\geq 1$ such that $L^{-1}f(t)\leq g(t)\leq L f(t)$ for all $t\geq 0$. 

We define $(f)_{U} := \fint_U f\, dz := \frac{1}{|U|}\int_U f \,dz$.  


\subsection{Orlicz functions}\label{subsec2.2}
Let $\phi:[0,\infty)\to[0,\infty)$ and $p,q>0$. 
We introduce some conditions.
\begin{itemize}
\item[\normalfont(aInc)$_p$]\label{ainc} 
The map $(0,\infty)\ni t\mapsto \phi(t)/t^p$ is almost increasing with constant $L\geq 1$.
\vspace{0.2cm}
\item[\normalfont(aDec)$_q$]\label{adec} 
The map $(0,\infty)\ni t\mapsto \phi(t)/t^q$ is almost decreasing with constant $L\geq 1$.
\end{itemize}
Note that \ainc{p} implies \ainc{p'} for all $p'<p$ and \adec{q} implies \adec{q'} for all $q'>q$. 
If $\phi$ satisfies \ainc{p} and \adec{q}, then $p\leq q$ and for any $t\in(0,\infty)$ and $0<c <1 <C$,
\[
 c^q L^{-1} \phi(t) \leq  \phi(ct)\leq  c^pL \phi(t)
 \quad \text{and}\quad 
 C^pL^{-1} \phi(t) \leq  \phi(Ct)\leq  C^qL \phi(t).
\]
We shall use these inequalities numerous times later without explicit mention.

These conditions allow us to work easily with weak $\Phi$-functions, without resorting to 
tricks to ensure convexity.
\begin{definition} 
The function $\phi:[0,\infty)\to[0,\infty]$ is said to be a \emph{weak $\Phi$-function} 
if it is increasing with $\phi(0)=0$, $\lim_{t\to 0^+}\phi(t)=0$, $\lim_{t\to\infty}\phi(t)=\infty$ 
and it satisfies \ainc{1}.  
\end{definition}
As an example of the robustness of this definition, we note that $\sqrt{\phi(t^2)}$ need not be 
convex if $\phi$ is, but the \ainc{1} property is conserved.
Moreover, the condition \ainc{1} captures some essential features of convexity, as it allows us 
to use the following Jensen-type inequality (cf.\ Lemma~4.3.2, \cite{HarH_book}). 

\begin{lemma}[Jensen inequality] \label{lem:Jensen}
If $\phi:[0,\infty)\to [0,\infty]$ is increasing with $\phi(0)=0$ and satisfies 
\ainc{1} with constant $L\geq 1$, then 
\[
\phi\bigg( \frac{1}{L^2} \fint_U |f|\, dz\bigg) \le \fint_U \phi(|f|)\, dz.
\] 
\end{lemma}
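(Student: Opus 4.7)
My plan is to combine a level-set decomposition of $U$ with two applications of \ainc{1}. Let $a := \fint_U |f|\,dz$ (the case $a=0$ being trivial) and set the threshold $t_0 := a/L^2$. Split $U = E \cup E^c$ with $E := \{z\in U : |f(z)| \ge t_0\}$.

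First I would establish a mass-concentration bound: because $|f|<t_0$ on $E^c$, one has $\int_{E^c}|f|\,dz\le t_0|U|$, and hence $\int_E|f|\,dz\ge (a-t_0)|U| = (L^2-1)t_0|U|$. Next, \ainc{1} applied pointwise on $E$ (with $s = t_0$ and $t = |f(z)|$) gives $\phi(|f(z)|)\ge L^{-1}\phi(t_0)|f(z)|/t_0$. Integrating this over $E$ and inserting the mass bound yields a preliminary inequality of the form $\phi(t_0)\le C(L)\fint_U\phi(|f|)\,dz$, with $C(L)$ depending only on $L$; a second invocation of \ainc{1} then rescales the argument of $\phi$ on the left so as to absorb the constant and produce the stated inequality with $1$ on the right.

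The main obstacle will be matching constants exactly, so that the two factors of $L$ coming from \ainc{1} combine to yield precisely the factor $L^2$ in the argument of $\phi$ rather than a larger loss. For $L$ close to $1$ the direct level-set argument degenerates, and the likely remedy is to replace it with a convex-minorant approach: build a convex auxiliary function $\psi$ comparable to $\phi$ with constants depending only on $L$ (for example by integrating the monotone envelope of $\phi(t)/t$), apply classical Jensen to $\psi$, and recover $\phi$ via \ainc{1}. Either way the structure is the same — one use of \ainc{1} to pass between $\phi$ and a monotone/convex object, and one to rescale the argument — and this pair of invocations is precisely what the exponent $2$ in $L^2$ encodes.
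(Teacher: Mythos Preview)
The paper does not give its own proof; it cites Lemma~4.3.2 of \cite{HarH_book}. The argument there is precisely your second route: pass from $\phi$ to an equivalent \emph{convex} $\Phi$-function $\psi$ (the equivalence constants depending only on $L$), apply the classical Jensen inequality to $\psi$, and transfer the conclusion back to $\phi$. So the convex-minorant idea is the intended one, and your level-set approach is a detour that --- as you already observe --- cannot survive the limit $L\to 1$.

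Your bookkeeping of constants, however, does not close. The level-set computation yields at best $\phi(t_0)\le \frac{L}{L^{2}-1}\fint_U\phi(|f|)\,dz$, and a second use of \ainc{1} moves a factor into the argument of $\phi$ only at the cost of an \emph{additional} factor of $L$ on the outside, not its reciprocal; you cannot absorb $\frac{L}{L^{2}-1}$ this way. More tellingly, the exact constant $L^{2}$ is not attainable at $L=1$: take $\phi(t)=\min(t^{2},t)$ (increasing, $\phi(0)=0$, and $\phi(t)/t=\min(t,1)$ is nondecreasing, so \ainc{1} holds with $L=1$) and let $f$ equal $\tfrac12$ and $\tfrac32$ on sets of equal measure; then $\phi\big(\fint|f|\big)=\phi(1)=1$ while $\fint\phi(|f|)=\tfrac12\cdot\tfrac14+\tfrac12\cdot\tfrac32=\tfrac78$. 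Hence no argument can produce the inequality with constant exactly $L^{2}$ across all $L\ge 1$. The convex-minorant approach does yield a constant $c(L)$ depending only on $L$, and that is all that is ever used downstream in the paper; but you should not expect to recover precisely $L^{2}$, and the hand-wave ``two invocations encode the exponent $2$'' is not a substitute for carrying out the construction and tracking the constants honestly.
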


Also, we can use the conditions effectively with Young-type inequalities 
and inverse functions (see the proof of Lemma~\ref{lem:1stTerm}). We recall the 
definition of the conjugate weak $\Phi$-function:
\[
\phi^*(s) := \sup_{t\ge 0} (st - \phi(t)).
\]
This definition directly implies Young's inequality:
\begin{equation}\label{young}
st\leq \phi(s)+\phi^*(t),\qquad s,t\geq 0.
\end{equation}
The exact value of $\phi^*$ can usually not be determined, but we have the following 
useful estimate which can be found in the proof of \cite[Theorem~2.4.10]{HarH_book}:
\begin{equation}\label{phi*phi}
\phi^*\big(\tfrac{\phi(t)}{t}\big) \approx \phi(t).
\end{equation}
This will be used multiple times in what follows.
Moreover, if $\phi$ is differentiable with $\phi'$ satisfying \adec{q}, 
then, by \cite[Lemma~3.6(2)]{HasOk},
\begin{equation}\label{eq:phi'}
\tfrac{\phi(t)}{t} \approx \phi'(t).
\end{equation}

\begin{remark}\label{rmkphiphic}
Suppose that $\phi$ is a weak $\Phi$-function which satisfies 
\ainc{p} and \adec{q} with $1\leq p\leq q$. We can define 
\[
\tphi(t):=\int_0^t s^{p-1}\sup_{\sigma\in [0,s]}\frac{\phi(\sigma)}{\sigma^p}\,ds. 
\]
Then $\tphi\approx \phi$ is differentiable and convex (since its derivative 
is increasing) and also satisfies \ainc{p} with $L=1$ and \adec{q}. Since $\phi(\sigma)>0$ 
when $\sigma>0$, $\tphi$ is also strictly increasing. Since the claims that 
we are proving are invariant under equivalence of weak $\Phi$-functions, we may thus 
assume when necessary that $\phi$ is differentiable, strictly increasing and a bijection.  
\end{remark}

We next introduce the left-inverse of a weak $\Phi$-function:
$$
\phi^{-1}(s):=\inf\{t\ge 0: \phi(t)\ge s\}.
$$ 
Clearly,  $\phi^{-1}(\phi(t))\le t$ and, if $\phi$ is continuous, $\phi(\phi^{-1}(s))\geq s$.  
Note that in view of  Remark~\ref{rmkphiphic}, we have $(\phi\circ \phi^{-1})(t)\approx (\phi^{-1}\circ \phi)(t)\approx t$ if $\phi$ satisfies  \adec{q} with $q\ge 1$.
By \cite[Proposition~2.3.7]{HarH_book}, $\phi^{-1}$ satisfies \ainc{\frac1q} or \adec{\frac1p} if and 
only if $\phi$ satisfies \adec{q} or \ainc{p}, respectively. 
From these facts and Lemma~\ref{lem:Jensen} we conclude the Jensen-inequality 
\begin{equation}\label{jensenineq-1}
\fint_U \phi(|f|)\, dz \le c \phi\bigg(\fint_U |f|\, dz \bigg)
\end{equation}
when $\phi$ satisfies \adec{1}.


Let us quote for later use a version of the standard iteration lemma which is particularly 
adapted to the Orlicz case \cite[Lemma~4.2]{HarHT17}. Recall that doubling means that 
$X(2s)\le C X(s)$, which is equivalent to \adec{q} for some $q<\infty$. 

\begin{lemma}\label{lem:iteration}
Let $Z$ be a bounded non-negative function in the  interval $[r,R] \subset \R$ 
and let $X$ be a doubling function in $[0, \infty)$.  
Assume that there exists $\theta\in [0,1)$ such that 
\[
Z(t) \leq X(\tfrac1{s-t}) + \theta Z(s)
\]
for all $r \leq t < s \leq R$. Then
\[
Z(r) \lesssim X(\tfrac1{R-r}),
\]
where the implicit constant depends only on the doubling constant and $\theta$.
\end{lemma}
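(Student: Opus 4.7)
The plan is to carry out a standard geometric iteration on a sequence of shrinking subintervals, choosing the shrink rate carefully so that the doubling blowup of $X$ is dominated by the decay coming from the factor $\theta<1$.

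First I would set up the iteration. Define $t_0 := r$ and $t_{k+1} := t_k + (1-\tau)\tau^k (R-r)$ for some $\tau \in (0,1)$ to be fixed later. This telescopes so that $t_k \to R$ as $k \to \infty$ and $t_{k+1}-t_k = (1-\tau)\tau^k(R-r)$. The assumption applied with $(t,s) = (t_k, t_{k+1})$ then gives
\[
Z(t_k) \leq X\!\left(\frac{\tau^{-k}}{(1-\tau)(R-r)}\right) + \theta \, Z(t_{k+1}).
\]
Iterating this $N$ times and using that $Z$ is bounded on $[r,R]$ yields
\[
Z(r) \leq \sum_{k=0}^{N-1} \theta^k \, X\!\left(\frac{\tau^{-k}}{(1-\tau)(R-r)}\right) + \theta^N \sup_{[r,R]} Z,
\]
and the last term tends to $0$ as $N\to\infty$ since $\theta<1$.

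Next I would invoke the doubling property. The excerpt states that $X$ doubling is equivalent to \adec{q} for some $q<\infty$, and \adec{q} yields the quantitative bound $X(Ks) \leq L K^q X(s)$ for all $K\geq 1$. Applying this with $K = \tau^{-k}$ and $s = 1/((1-\tau)(R-r))$ gives
\[
X\!\left(\frac{\tau^{-k}}{(1-\tau)(R-r)}\right) \leq L \, \tau^{-kq}\, X\!\left(\frac{1}{(1-\tau)(R-r)}\right).
\]
Substituting back, the sum becomes $L\, X(1/((1-\tau)(R-r))) \sum_k (\theta \tau^{-q})^k$.

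Finally I would choose $\tau$ so the series converges. Picking $\tau := \theta^{1/(2q)} \in (0,1)$ gives $\theta \tau^{-q} = \theta^{1/2} < 1$, so $\sum_k (\theta\tau^{-q})^k = (1-\theta^{1/2})^{-1}$, a constant depending only on $\theta$ and $q$. A final application of doubling (since $(1-\tau)^{-1}\geq 1$ depends only on $\theta,q$) absorbs the $(1-\tau)$ factor to yield $X(1/((1-\tau)(R-r))) \lesssim X(1/(R-r))$, producing $Z(r) \lesssim X(1/(R-r))$ with an implicit constant depending only on $\theta$ and the doubling constant (equivalently $q$ and $L$), as required. The only mildly delicate point is balancing the geometric decay $\theta^k$ against the polynomial blowup $\tau^{-kq}$ from doubling, which is handled cleanly by the choice $\tau = \theta^{1/(2q)}$; the rest of the argument is essentially bookkeeping.
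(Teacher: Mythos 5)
Your proof is correct and is essentially the standard geometric iteration that the paper relies on: the paper itself does not prove this lemma but quotes it from \cite[Lemma~4.2]{HarHT17}, whose argument is exactly this choice of points $t_{k+1}=t_k+(1-\tau)\tau^k(R-r)$, iteration of the hypothesis, and use of doubling (in the form $X(Ks)\le LK^qX(s)$) with $\tau$ calibrated so that $\theta\tau^{-q}<1$. The only cosmetic point is the degenerate case $\theta=0$, where your choice $\tau=\theta^{1/(2q)}$ is not in $(0,1)$; there the claim follows at once by taking $t=r$, $s=R$ in the hypothesis.
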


We end this subsection with the following lemma and notation which will be used often.

\begin{lemma}\label{lem:1stTerm}
Assume that $\phi:[0,\infty)\to[0,\infty)$ is a weak $\Phi$-function satisfying \ainc{p} and \adec{q} with $1<p\leq q$ and that 
$$
\fint_{U}\phi(|Du|)\, dz\leq  \phi(\lambda).
$$
Then, for any $\theta_0\in [1-\frac1q,1]$ and  $\delta\in(0,1)$, 
\[
\begin{split}
A_0
\le 
\begin{cases}
\delta \lambda + c_\delta \phi^{-1}(\Theta_0) \\
c \lambda
\end{cases}
\end{split}
\]
for some $c_\delta=c(p,q,L,\delta)>0$, where
\begin{equation}\label{eq:A0T0}
A_0:= \frac{1}{\phi_2(\lambda)}\fint_{U}\phi_1 (|Du|)\,dz
\quad\text{and}\quad
\Theta_0 := \bigg(\fint_{U}\phi(|Du|)^{\theta_0}\,dz\bigg)^{\frac{1}{\theta_0}}.
\end{equation}
\end{lemma}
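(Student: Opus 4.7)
The plan is to estimate $\fint_U \phi_1(|Du|)\,dz$ by splitting the domain at the intrinsic level $\mu := \phi^{-1}(\Theta_0)$ and then converting back to $\phi_2(\lambda)$ via the hypothesis $\fint_U \phi(|Du|)\,dz \le \phi(\lambda)$. Both claimed bounds will follow from a common intermediate estimate $A_0 \lesssim \lambda^{2-p}\mu^{p-1}$.

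I would split $U = \{|Du|\le\mu\}\cup\{|Du|>\mu\}$. On the sub-level set, $\phi_1(t)=\phi(t)/t$ is almost increasing by \ainc{1}, contributing $L\phi_1(\mu)$. For the super-level part the key identity is
\[
\phi_1(t) \;=\; \phi(t)^{\theta_0}\,\frac{\phi(t)^{1-\theta_0}}{t} \;=\; \phi(t)^{\theta_0}\,\bigl(\phi(t)/t^q\bigr)^{1-\theta_0}\, t^{q(1-\theta_0)-1};
\]
the hypothesis $\theta_0 \ge 1 - 1/q$ makes the last exponent nonpositive, so combined with \adec{q} the factor $\phi(t)^{1-\theta_0}/t$ is almost decreasing. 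Hence on $\{|Du| > \mu\}$, $\phi_1(|Du|) \lesssim \phi(|Du|)^{\theta_0}\phi(\mu)^{1-\theta_0}/\mu$, and integrating gives at most $\phi(\mu)^{1-\theta_0}\Theta_0^{\theta_0}/\mu$. Since $\phi(\mu)\approx\Theta_0$ by Remark~\ref{rmkphiphic} (as \adec{q} holds), both contributions collapse to a multiple of $\phi_1(\mu)$, and dividing by $\phi_2(\lambda)$ yields $A_0 \lesssim \phi(\mu)\lambda^2/(\mu\phi(\lambda))$.

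To finish I would compare $\mu$ and $\lambda$. Jensen's inequality applied to the concave power $s\mapsto s^{\theta_0}$ (legitimate since $\theta_0\le 1$) together with the hypothesis gives $\Theta_0 \le \phi(\lambda)$, hence $\mu\le c\lambda$. Then \ainc{p} supplies $\phi(\mu)/\phi(\lambda)\lesssim (\mu/\lambda)^p$ in the regime $\mu\le\lambda$ (and the regime $\mu>\lambda$ reduces via \adec{q} to $\mu\approx\lambda$), producing $A_0 \lesssim \lambda^{2-p}\mu^{p-1}$. The bound $A_0\le c\lambda$ is then immediate since $p>1$ and $\mu\le c\lambda$. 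For $A_0\le \delta\lambda+c_\delta\mu$: if $p\ge 2$ then $(\mu/\lambda)^{p-2}\le 1$ gives $A_0\lesssim\mu$ directly; if $1<p<2$ the weighted Young inequality $a^{2-p}b^{p-1}\le ((2-p)/\eta)\,a + (p-1)\,\eta^{(2-p)/(p-1)}\,b$ applied with $a=\lambda$, $b=\mu$, $\eta=(2-p)/\delta$ produces exactly the claimed form.

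The main non-routine step is the super-level estimate. The algebraic decomposition of $\phi_1$ that drives the whole proof rests on a monotonicity property whose threshold is precisely $\theta_0 = 1 - 1/q$; for any smaller $\theta_0$ the factor $\phi(t)^{1-\theta_0}/t$ ceases to be almost decreasing and the argument collapses. Once this monotonicity is isolated, the remainder is a direct application of Young and AM--GM inequalities.
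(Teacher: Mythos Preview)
Your argument is correct and reaches the same intermediate conclusion as the paper, namely $\fint_U\phi_1(|Du|)\,dz\lesssim\phi_1(\mu)$ with $\mu=\phi^{-1}(\Theta_0)$, but by a genuinely different route. The paper obtains this bound by observing that $\phi_1\circ\phi^{-1}$ satisfies \adec{1-1/q}, so that $t\mapsto(\phi_1\circ\phi^{-1})(t^{1/\theta_0})$ satisfies \adec{1} precisely when $\theta_0\ge 1-1/q$, and then applies the reverse Jensen inequality \eqref{jensenineq-1}. You instead split the domain at the level $\mu$ and exploit the pointwise monotonicity of $\phi(t)^{1-\theta_0}/t$; the threshold $\theta_0=1-1/q$ appears in both arguments for exactly the same reason. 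For the final step, the paper constructs an auxiliary Orlicz function $H$ with $H^{-1}(t)\approx t/\phi_1(t)^{1/q}$ and invokes the Orlicz--Young inequality for $H,H^*$, whereas you use \ainc{p} to pass to the power-type bound $A_0\lesssim\lambda^{2-p}\mu^{p-1}$ and finish with elementary AM--GM. Your approach is more elementary and avoids the auxiliary construction; the paper's is more intrinsic to the Orlicz setting and does not single out the exponent $p$. One minor simplification: since $\Theta_0\le\phi(\lambda)$ by H\"older and $\phi^{-1}(\phi(\lambda))\le\lambda$ by definition of the left-inverse, you in fact have $\mu\le\lambda$ with no constant, so the ``regime $\mu>\lambda$'' aside is unnecessary.
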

\begin{proof}
We write $f:=Du$. Since 
$\phi_1(\phi^{-1}(t)) \approx \frac t{\phi^{-1}(t)}$ 
satisfies \adec{1-\frac1q}, we find that $t\mapsto (\phi_1\circ \phi^{-1})(t^{1/\theta_0})$ 
satisfies \adec{1}. Then we obtain by \eqref{jensenineq-1} and H\"older's inequality that 
\[
\fint_{U} \phi_1(|f|)\,dz 
\le
c(\phi_1 \circ \phi^{-1})(\Theta_0)
\le
c (\phi_1 \circ \phi^{-1})\bigg(\fint_{U}\phi(|f|)\,dz\bigg) 
\le
c\phi_1(\lambda).
\]
We use only the first inequality to estimate $\frac1q$-part of the integral, and 
the whole inequality for the remaining $(1-\frac1q)$ of the integral. Thus  
\[
\frac{1}{\phi_2(\lambda)}\fint_{U}\phi_1 (|f|)\,dz
\le
\frac{c [(\phi_1\circ \phi^{-1})(\Theta_0) ]^{\frac{1}{q}}}{\phi_2(\lambda)} \bigg(\fint_{U}\phi_1 (|f|)\,dz\bigg)^{1-\frac{1}{q}}
\le
\frac{c\lambda}{\phi_1(\lambda)^{\frac{1}{q}}}[(\phi_1\circ \phi^{-1})(\Theta_0) ]^{\frac{1}{q}}.
\]

Let us define $H$ by 
\[
H(t):=G^{-1}(t^q), \quad \text{where }\ G(t):=\frac{t^{q+1}}{\phi(t)}.
\]
Observe that, since $G$ satisfies \ainc{1} and \adec{q+1-p},
$H$ satisfies \ainc{\frac{q}{q+1-p}} and \adec{q} and \[
H^{-1}(t) \approx \Big(\frac{t^{q+1}}{\phi(t)}\Big)^\frac1q =\frac{t}{\phi_1(t)^{1/q}}.
\]
From $H^{-1}(t)(H^*)^{-1}(t)\approx t$, we see that
$(H^{*})^{-1}(t)\approx \frac{t}{H^{-1}(t)}\approx\phi_1(t)^\frac{1}{q}$.
Then by Young's inequality we have that for any $\delta>0$,
\[
\frac{1}{\phi_2(\lambda)}\fint_{U}\phi_1 (|Du|)\,dz 
\le 
\delta H\Big(\frac{\lambda}{\phi_1(\lambda)^{1/q}}\Big) 
+ c_\delta H^{*}([(\phi_1\circ \phi^{-1})(\Theta_0) ]^{\frac{1}{q}})
\approx 
\delta \lambda +c_\delta  \phi^{-1}(\Theta_0). 
\]
For the upper bound $c\lambda$, we simply fix $\delta=1$ and use H\"older's inequality 
for $\Theta_0$ as before. 
\end{proof}

\subsection{Gagliardo--Nirenberg type interpolation inequality for Orlicz functions}

In this subsection, we obtain  a Gagliardo--Nirenberg type interpolation inequality 
involving Orlicz functions which will be a crucial ingredient in the proof of a reverse H\"older type estimate. 
Let us recall a usual scaling invariant version of the Gagliardo--Nirenberg interpolation inequality 
in balls (see \cite{Nirenberg}): 
for $\gamma>0$, $p\in [1,n)$, $q\geq 1$ and $\theta\in[0,1]$,
\begin{equation}\label{GNclassic}
\bigg(\fint_{B_{r}}\big|\tfrac{f}{r}\big|^\gamma\, dx\bigg)^{\frac1\gamma}
\leq 
c\bigg(\fint_{B_r}\left[|Df|^p+\big|\tfrac{f}{r}\big|^p\right]\,dx\bigg)^{\frac{\theta}{p}}\bigg(\fint_{B_r}\big|\tfrac{f}{r}\big|^q\,dx\bigg)^{\frac{1-\theta}{q}}
\end{equation}
for some $c=c(n,p)>0$, provided that
$$
\frac{1}{\gamma}\geq \frac\theta{p^*}+\frac{1-\theta}{q},
$$
where $p^*=\frac{np}{n-p}$.
Let us prove that this inequality also holds for $q\in (0,1)$. This may be known, but we have not found 
a proof (e.g.\ it is not mentioned in \cite{Tri14} which deals with extensions of the parameter 
ranges). 

\begin{lemma}[Gagliardo--Nirenberg inequality]\label{lem:GNineq}
Let $p\in [1,n)$ and $\gamma,q>0$. Then 
\[
\bigg(\fint_{B_{r}}\big|\tfrac{f}{r}\big|^\gamma\, dx\bigg)^{\frac1\gamma}
\leq c
\bigg(\fint_{B_r}\left[|Df|^p+\big|\tfrac{f}{r}\big|^p\right]\,dx\bigg)^{\frac{\theta}{p}}\bigg(\fint_{B_r}\big|\tfrac{f}{r}\big|^q\,dx\bigg)^{\frac{1-\theta}{q}}
\]
for some $c=c(n,p)>0$, provided that
\[
\frac{1}{\gamma}\geq \frac\theta{p^*}+\frac{1-\theta}{q}.
\]
\end{lemma}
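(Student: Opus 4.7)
The plan is to bypass the $q \ge 1$ restriction of the classical Gagliardo--Nirenberg inequality altogether and rederive the full $q > 0$ statement from two ingredients that are both valid for all $q > 0$: the scaled Sobolev--Poincar\'e inequality on balls and the log-convexity of averaged $L^s$-norms.

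First I would reduce to the equality case $\tfrac{1}{\gamma} = \tfrac{\theta}{p^*} + \tfrac{1-\theta}{q}$. Since $s \mapsto \bigl(\fint_{B_r}|f/r|^s\,dx\bigr)^{1/s}$ is nondecreasing by Jensen's inequality, a bound for the critical $\tilde\gamma$ satisfying equality immediately yields the bound for every smaller $\gamma$, covering the full range $\tfrac{1}{\gamma} \ge \tfrac{\theta}{p^*} + \tfrac{1-\theta}{q}$. In the equality case, the constraint $\theta \in [0,1]$ is equivalent to $q \le \gamma \le p^*$, and the endpoints $\theta = 0$ (H\"older for averages) and $\theta = 1$ (pure Sobolev embedding) are trivial, so I would focus on $\theta \in (0,1)$.

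Then I would combine two estimates. The averaged Sobolev--Poincar\'e inequality on $B_r$ gives
\[
\bigg(\fint_{B_r}\big|\tfrac{f}{r}\big|^{p^*}\,dx\bigg)^{\!1/p^*} \le c(n,p)\bigg(\fint_{B_r}\bigl[|Df|^p + \big|\tfrac{f}{r}\big|^p\bigr]\,dx\bigg)^{\!1/p},
\]
while log-convexity of $L^s$-norms produces
\[
\bigg(\fint_{B_r}\big|\tfrac{f}{r}\big|^{\gamma}\,dx\bigg)^{\!1/\gamma} \le \bigg(\fint_{B_r}\big|\tfrac{f}{r}\big|^q\,dx\bigg)^{\!(1-\theta)/q}\bigg(\fint_{B_r}\big|\tfrac{f}{r}\big|^{p^*}\,dx\bigg)^{\!\theta/p^*},
\]
which I would obtain by applying H\"older's inequality to the factorization $|f|^\gamma = |f|^{\gamma(1-\theta)}\cdot|f|^{\gamma\theta}$ with conjugate exponents $r_1 = q/[\gamma(1-\theta)]$ and $r_2 = p^*/[\gamma\theta]$; the identity $r_1^{-1} + r_2^{-1} = 1$ is precisely the equality constraint. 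Plugging the first estimate into the second yields the lemma.

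The only point to check---and the only place where the regime $q < 1$ could in principle be delicate---is admissibility of H\"older's inequality, i.e.\ $r_1, r_2 \ge 1$. These amount to $\gamma(1-\theta) \le q$ and $\gamma\theta \le p^*$, which are immediate from $\tfrac{1}{\gamma} = \tfrac{\theta}{p^*} + \tfrac{1-\theta}{q}$ because each summand on the right is itself a lower bound for $1/\gamma$. Consequently the argument is uniform in $q > 0$ and requires no separate treatment nor appeal to the classical $q \ge 1$ statement.
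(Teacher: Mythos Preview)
Your argument is correct and in fact cleaner than the paper's. Both proofs ultimately rest on the same two building blocks---the scaled Sobolev--Poincar\'e inequality controlling the $L^{p^*}$ average and H\"older interpolation between $L^q$ and $L^{p^*}$---but the paper organizes them differently. It first quotes the classical Gagliardo--Nirenberg inequality \eqref{GNclassic} as a black box for $q\ge 1$, and for $q\in(0,1)$ it applies \eqref{GNclassic} with $q$ replaced by $1$, then uses H\"older (with $\tilde\theta p^*+(1-\tilde\theta)q=1$) together with Sobolev--Poincar\'e to bound the $L^1$ factor by the $L^{p^*}$ and $L^q$ norms, finally matching up the exponents. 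Your route bypasses \eqref{GNclassic} entirely: after reducing to the equality case $\tfrac1\gamma=\tfrac\theta{p^*}+\tfrac{1-\theta}q$ via monotonicity of power means, a single H\"older step interpolates $L^\gamma$ directly between $L^q$ and $L^{p^*}$, and one application of Sobolev--Poincar\'e finishes. The advantage of your approach is that it is self-contained, uniform in $q>0$, and avoids both the case split and the exponent bookkeeping of the paper; the paper's approach has the minor virtue of explicitly connecting to the classical statement. One cosmetic remark: your aside that ``$\theta\in[0,1]$ is equivalent to $q\le\gamma\le p^*$'' presumes $q\le p^*$, but this plays no role in the proof since you verify $r_1,r_2\ge1$ directly from the equality constraint.
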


\begin{proof}
For $q\ge 1$, the claim is just the Gagliardo--Nirenberg inequality \eqref{GNclassic} quoted above. 
For $q\in(0,1)$, choose $\tilde\theta\in(0,1)$ such that
\[
\tilde\theta p^* + (1-\tilde \theta)q = 1,
\]
Then by H\"older's inequality we have
\[
\fint_{B_r}\big|\tfrac{f}{r}\big|\,dx
\leq 
\bigg(\fint_{B_r}\big|\tfrac{f}{r}\big|^{p^*}\,dx\bigg)^{\tilde\theta} 
\bigg(\fint_{B_r}\big|\tfrac{f}{r}\big|^{q}\,dx\bigg)^{1-\tilde\theta}
\]
and by the Sobolev--Poincar\'e inequality
\[
\fint_{B_r}\big|\tfrac{f}{r}\big|^{p^*}\,dx
\le c 
\bigg(\fint_{B_r}\left[|Df|^p+\big|\tfrac{f}{r}\big|^p\right]\,dx\bigg)^\frac{p^*}p.
\]

We use the Gagliardo--Nirenberg inequality \eqref{GNclassic} 
for $(\gamma,p,q,\theta) = (\gamma,p,1,\theta_0)$
and the estimates from the previous paragraph to conclude that 
\begin{align*}
\bigg(\fint_{B_{r}}\big|\tfrac{f}{r}\big|^\gamma\, dx\bigg)^{\frac1\gamma}
&\le
c \bigg(\fint_{B_r}\left[|Df|^p+\big|\tfrac{f}{r}\big|^p\right]\,dx\bigg)^{\frac{\theta_0}{p}}
\bigg(\fint_{B_r}\big|\tfrac{f}{r}\big|\,dx\bigg)^{1-\theta_0} \\
&\le
c\bigg(\fint_{B_r}\left[|Df|^p+\big|\tfrac{f}{r}\big|^p\right]\,dx\bigg)^
{\frac{\theta_0+(1-\theta_0)\tilde\theta p^*}p}
\bigg(\fint_{B_r}\big|\tfrac{f}{r}\big|^{q}\,dx\bigg)^{(1-\tilde\theta)(1-\theta_0)}
\end{align*}
provided that
\[
\frac{1}{\gamma}\geq \frac{\theta_0}{p^*}+1-\theta_0.
\]

Let $\theta:= \theta_0+(1-\theta_0)\tilde\theta p^* = 1 - (1-\theta_0)(1-\tilde\theta)q$. 
Then the exponent of the first term 
on the right-hand side is $\frac\theta p$ and the exponent of the second term is 
$\frac{1-\theta}q$.
Thus we have the Gagliardo--Nirenberg inequality for parameter value $\theta$. 
It remains to check that the bound for $\gamma$ is correct. For this we calculate 
\[
\frac{\theta}{p^*}+\frac{1-\theta}q
=
\frac{\theta_0}{p^*} + \tilde\theta(1-\theta_0) + (1-\tilde\theta)(1-\theta_0)
=
\frac{\theta_0}{p^*}+1-\theta_0. \qedhere
\]
\end{proof}

We conclude this subsection by deriving a Gagliardo--Nirenberg type inequality for 
Orlicz functions.

\begin{lemma}\label{lem:GNOrlicz}
Suppose that $\psi:[0,\infty)\to[0,\infty)$ is a weak $\Phi$-function and satisfies 
\adec{q_1} for some $q_1\geq 1$. For $p\in [1,n)$ and $q_2>0$ we have
\[ 
\bigg(\fint_{B_{r}}\psi\big(\big|\tfrac{f}{r}\big|\big)^\gamma\, dx\bigg)^{\frac1\gamma}
\le c \bigg(\fint_{B_r}\left[\psi(|Df|)^p+\psi\big(\big|\tfrac{f}{r}\big|\big)^p\right]\,dx\bigg)^{\frac{\theta}{p}}
\psi\bigg(\Big(\fint_{B_r}\big|\tfrac{f}{r}\big|^{q_2}\,dx\Big)^\frac1{q_2}\bigg)^{1-\theta}
\]
for some $c=c(n,L,q_1,q_2)>0$, provided that 
\[
\frac{1}{\gamma}\geq \frac\theta{p^*}+\frac{(1-\theta)q_1}{q_2}.
\]
 \end{lemma}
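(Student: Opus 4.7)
The strategy is to apply the classical Gagliardo--Nirenberg inequality (Lemma~\ref{lem:GNineq}) to the auxiliary function
\[
h(x) := r\,\psi\bigl(|f(x)|/r\bigr),
\]
so that $|h|/r = \psi(|f|/r)$ and the left-hand side of the scalar Gagliardo--Nirenberg inequality already equals the desired Orlicz quantity. By Remark~\ref{rmkphiphic} I may assume $\psi$ is differentiable and strictly increasing, so that by the chain rule $|Dh| \le \psi'(|f|/r)\,|Df|$.

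The key step is to replace this bound by one involving $\psi(|Df|)$. For this I apply Young's inequality \eqref{young}, $\psi'(|f|/r)\,|Df| \le \psi(|Df|) + \psi^*(\psi'(|f|/r))$, and then use \eqref{eq:phi'} (i.e.\ $\psi'(t)\approx\psi(t)/t$) together with \eqref{phi*phi} (i.e.\ $\psi^*(\psi(t)/t)\approx \psi(t)$) to obtain
\[
|Dh| \lesssim \psi(|Df|) + \psi(|f|/r),
\qquad
|Dh|^p \lesssim \psi(|Df|)^p + \psi(|f|/r)^p.
\]
This is the main obstacle: a crude polynomial bound on $\psi'$ would destroy the precise Orlicz scaling, and it is only the combination of the two equivalences above that gives the correct right-hand side.

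Next I apply Lemma~\ref{lem:GNineq} to $h$ with the triple $(\gamma,p,q)$ where $q:=q_2/q_1>0$. The admissibility condition becomes
\[
\tfrac{1}{\gamma}\geq \tfrac{\theta}{p^*}+\tfrac{1-\theta}{q}=\tfrac{\theta}{p^*}+\tfrac{(1-\theta)q_1}{q_2},
\]
which is precisely the hypothesis. Combined with the gradient estimate of the previous paragraph, this yields
\[
\Bigl(\fint_{B_r}\psi(|f|/r)^\gamma\Bigr)^{1/\gamma}
\le c\Bigl(\fint_{B_r}\!\bigl[\psi(|Df|)^p+\psi(|f|/r)^p\bigr]\Bigr)^{\theta/p}
\Bigl(\fint_{B_r}\psi(|f|/r)^{q_2/q_1}\Bigr)^{(1-\theta)q_1/q_2}.
\]

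It remains to convert the last factor into $\psi\bigl((\fint(|f|/r)^{q_2})^{1/q_2}\bigr)^{1-\theta}$. Set $\tilde\psi(s):=\psi(s^{1/q_2})^{q_2/q_1}$. Since $\psi$ satisfies \adec{q_1}, the map $t\mapsto\psi(t)^{q_2/q_1}/t^{q_2}$ is almost decreasing, hence $\tilde\psi$ satisfies \adec{1}. Applying the Jensen inequality \eqref{jensenineq-1} to $\tilde\psi$ with $g=(|f|/r)^{q_2}$ gives
\[
\fint_{B_r}\psi(|f|/r)^{q_2/q_1}\,dx
\le c\,\psi\Bigl(\bigl(\fint_{B_r}(|f|/r)^{q_2}\,dx\bigr)^{1/q_2}\Bigr)^{q_2/q_1},
\]
and raising to the power $(1-\theta)q_1/q_2$ completes the proof.
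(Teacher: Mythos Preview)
Your proof is correct and follows essentially the same route as the paper: apply the scalar Gagliardo--Nirenberg inequality to $h=r\psi(|f|/r)$, control $|Dh|$ via Young's inequality together with \eqref{eq:phi'} and \eqref{phi*phi}, and finish with a Jensen-type step. The only cosmetic difference is in the last step: the paper invokes Lemma~\ref{lem:Jensen} for the function $t\mapsto \psi^{-1}(t^{q_1/q_2})^{q_2}$ (which satisfies \ainc{1} because $\psi$ satisfies \adec{q_1}), whereas you invoke the dual form \eqref{jensenineq-1} for $\tilde\psi(s)=\psi(s^{1/q_2})^{q_2/q_1}$ (which satisfies \adec{1} for the same reason); these are equivalent formulations.
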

 
\begin{proof}
In view of Remark~\ref{rmkphiphic}, if suffices to consider $\psi$ that is strictly increasing and differentiable with $\psi'(t)\ge 0$. 
Then using Young's inequality \eqref{young}, \eqref{eq:phi'} and \eqref{phi*phi} we have that   
\[
|D(r \psi(|\tfrac fr|))|=\psi'(|\tfrac fr|) |Df| \le  \psi(|Df|) + \psi^*(\psi'(|\tfrac fr|)) \le \psi(|Df|) + c \psi (|\tfrac fr|).
\]
Hence applying Lemma~\ref{lem:GNineq} with $q=\frac{q_2}{q_1}$ to the function $r \psi(\left|\frac fr\right|)$, we conclude that
\[\begin{split}
\bigg(\fint_{B_{r}}\psi\big(\big|\tfrac{f}{r}\big|\big)^\gamma\, dx\bigg)^{\frac1\gamma}
&\le c
\bigg(\fint_{B_r}\left[\psi(|Df|)^p+\psi\big(\big|\tfrac{f}{r}\big|\big)^p\right]\,dx\bigg)^{\frac{\theta}{p}}
\bigg(\fint_{B_r}\psi\big(\big|\tfrac{f}{r}\big|\big)^{q}\,dx\bigg)^\frac{1-\theta}{q} \\
& \le c
\bigg(\fint_{B_r}\left[\psi(|Df|)^p+\psi\big(\big|\tfrac{f}{r}\big|\big)^p\right]\,dx\bigg)^{\frac{\theta}{p}}
\psi\bigg(\Big(\fint_{B_r}\big|\tfrac{f}{r}\big|^{q_2}\,dx\Big)^\frac1{q_2}\bigg)^{1-\theta};
\end{split}
\]
here the last inequality follows from Lemma~\ref{lem:Jensen} since $t\mapsto \psi^{-1}(t^{1/q})^{q_2}$ 
satisfies \ainc{1}.
\end{proof}

\section{Poincare and reverse H\"older type inequalities}
\label{sec3}

In this section, we derive a reverse H\"older type inequality for the gradients 
of weak solutions to \eqref{maineq} on regions satisfying a balancing condition, \eqref{lemreverseass}. We suppose that $\phi:[0,\infty)\to[0,\infty)$ is a weak $\Phi$-function and satisfies \ainc{p} and \adec{q} with constant $L\geq 1$ and that 
$p$ and $q$ satisfy \eqref{pq-range}. We start with a Caccioppoli type inequality. 
  
\begin{lemma}[Caccioppoli inequality]\label{lem:caccio}
Let $u$ be a weak solution to \eqref{maineq} with \eqref{A-condition} and 
$Q^\lambda_R\Subset\Omega_I$ with $\lambda,R>0$. For $r\in[\frac{R}{4},R)$ 
and $a=(a_1,\dots, a_N)\in\mr^N$, we have 
\[
\begin{split}
\phi_2(\lambda) \sup_{s\in\Lambda^\lambda_r}\fint_{B_r}  \bigg|\frac{u(s)-a}{r}\bigg|^2\,dx &+ \fint_{Q_r^\lambda} \phi(|Du|)\, dz \\
&\hspace{-2cm}\leq c\fint_{Q^\lambda_R}\left[\phi_2(\lambda)\Big|\frac{u-a}{R-r}\Big|^2+ \phi\Big(\Big|\frac{u-a}{R-r}\Big|\Big)\right]\, dz
\end{split}
\]
for some $c=c(n,N,p,q,L,\nu,\Lambda)>0$, where $u(s)=u(x,s)$. 
\end{lemma}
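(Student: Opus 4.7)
I would follow the classical parabolic Caccioppoli argument on the intrinsic cylinder $Q^\lambda_R$, using Steklov averages in $t$ to legitimise testing with $u-a$ and the Young-type identity $\phi^*(\phi(t)/t)\approx\phi(t)$ from \eqref{phi*phi} to absorb the Orlicz growth. The coupling between inner and outer cylinders introduced by the Young absorption is then removed by the iteration Lemma~\ref{lem:iteration}.

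\textbf{Step 1 (test function and time term).} Fix $R/4\leq t<s\leq R$ and choose cutoffs $\eta_1\in C_c^\infty(B_s)$ with $\eta_1\equiv 1$ on $B_t$ and $|D\eta_1|\leq 2/(s-t)$, and $\eta_2\in C_c^\infty(\Lambda^\lambda_s)$ with $\eta_2\equiv 1$ on $\Lambda^\lambda_t$ and $|\partial_t\eta_2|\leq 2\phi_2(\lambda)/(s-t)^2$ --- the latter bound is dictated by the length $2s^2/\phi_2(\lambda)$ of $\Lambda^\lambda_s$. Testing the Steklov-averaged version of \eqref{weakform} with $\zeta_i=\eta_1^2\eta_2(u_{h,i}-a_i)$ and letting $h\to 0$, the identity $\partial_t u\cdot(u-a)=\tfrac12\partial_t|u-a|^2$ followed by integration by parts in $t$ produces a right-hand side term $\tfrac12\int|u-a|^2\eta_1^2|\partial_t\eta_2|\,dz\leq c\phi_2(\lambda)(s-t)^{-2}\int_{Q^\lambda_s}|u-a|^2\,dz$. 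To simultaneously extract the supremum in $\sigma\in\Lambda^\lambda_t$ of $\int_{B_t}|u(\sigma)-a|^2\,dx$, I repeat the test with $\eta_2$ replaced by a smoothing of $\chi_{(\tau-s^2/\phi_2(\lambda),\sigma)}$ and take $\sup_\sigma$ at the end; this is the standard Moser trick, and the prefactor $\phi_2(\lambda)$ on the $L^2$ term of the RHS comes precisely from $|\partial_t\eta_2|$.

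\textbf{Step 2 (spatial term).} Expanding the divergence gives
\[
\int A_i\cdot D(\eta_1^2\eta_2(u_i-a_i))\,dz
=\int\eta_1^2\eta_2\,A_i\cdot Du_i\,dz+2\int\eta_1\eta_2(u_i-a_i)\,A_i\cdot D\eta_1\,dz.
\]
The first integral is $\geq\nu\int\eta_1^2\eta_2\phi(|Du|)\,dz$ by the coercivity in \eqref{A-condition}. For the second, the bound $|A_i|\leq\Lambda\phi(|Du|)/|Du|$ combined with Young's inequality \eqref{young} applied to the conjugate pair $(\phi,\phi^*)$ together with \eqref{phi*phi} yields, for any $\delta\in(0,1)$,
\[
2\eta_1\eta_2|u-a||D\eta_1|\,\tfrac{\phi(|Du|)}{|Du|}
\leq \delta\,\eta_1^2\eta_2\phi(|Du|)+c_\delta\,\phi\!\Big(\tfrac{|u-a|}{s-t}\Big).
\]
Choosing $\delta=\nu/2$ keeps $\tfrac{\nu}{2}\int_{Q^\lambda_t}\phi(|Du|)\,dz$ on the left, at the cost of a residual $\delta\int_{Q^\lambda_s}\phi(|Du|)\,dz$ on the right.

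\textbf{Step 3 (iteration).} Define
\[
Z(\rho):=\sup_{\sigma\in\Lambda^\lambda_\rho}\int_{B_\rho}|u(\sigma)-a|^2\,dx+\tfrac{1}{\phi_2(\lambda)}\int_{Q^\lambda_\rho}\phi(|Du|)\,dz,
\]
so that Steps~1--2 give $Z(t)\leq\tfrac12 Z(s)+c\,X(1/(s-t))$ for $R/4\leq t<s\leq R$, where
\(
X(\sigma):=\int_{Q^\lambda_R}\bigl[\sigma^2|u-a|^2+\tfrac{1}{\phi_2(\lambda)}\phi(\sigma|u-a|)\bigr]\,dz
\)
is doubling in $\sigma$ by \adec{q}. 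Lemma~\ref{lem:iteration} then gives $Z(r)\lesssim X(1/(R-r))$, and dividing by the appropriate cylinder measures (using $|Q^\lambda_r|\approx|Q^\lambda_R|$ for $r\in[R/4,R]$) produces the Caccioppoli inequality as stated. The only delicate point is arranging the time cutoff so that the sup and gradient estimates emerge together with the correct $\phi_2(\lambda)$ prefactor; once that is handled, the Orlicz growth estimates and the doubling iteration are routine.
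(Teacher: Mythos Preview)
Your overall strategy is sound and your final iteration via Lemma~\ref{lem:iteration} does yield the stated inequality, but Step~2 contains an internal inconsistency that you should fix, and your route differs from the paper's in a way worth noting.

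\medskip
\noindent\textbf{The gap in Step~2.} Your displayed Young estimate
\[
2\eta_1\eta_2|u-a|\,|D\eta_1|\,\tfrac{\phi(|Du|)}{|Du|}
\le \delta\,\eta_1^2\eta_2\,\phi(|Du|)+c_\delta\,\phi\!\Big(\tfrac{|u-a|}{s-t}\Big)
\]
is not valid in general. After applying Young's inequality and \eqref{phi*phi} you obtain $\delta\,\phi^*\big(\eta_1\,\phi(|Du|)/|Du|\big)$, and since $\phi^*$ only satisfies \ainc{q/(q-1)} this gives a factor $\eta_1^{q/(q-1)}$, which is \emph{larger} than $\eta_1^2$ when $q>2$. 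Hence the term cannot be absorbed into $\nu\int\eta_1^2\eta_2\,\phi(|Du|)$. Your subsequent sentence about a ``residual $\delta\int_{Q^\lambda_s}\phi(|Du|)$ on the right'' is in fact the correct statement (drop the cutoff, keep $\delta\,\chi_{Q^\lambda_s}\phi(|Du|)$), but it contradicts the display: if the display were right, there would be no residual and no need for Step~3. With the corrected residual term your iteration in Step~3 goes through exactly as you describe, so the proof is easily repaired.

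\medskip
\noindent\textbf{Comparison with the paper.} The paper avoids the iteration altogether by choosing the spatial cutoff to be $\eta^q$ rather than $\eta^2$. Then the cross term carries a factor $\eta^{q-1}$, and the \ainc{q/(q-1)} property of $\phi^*$ gives precisely $\phi^*\big(\eta^{q-1}\phi(|Du|)/|Du|\big)\le c\,\eta^q\phi(|Du|)$, which is absorbed directly into the coercivity term. This is a one-shot argument with no hole-filling step. Your approach---square cutoff plus Lemma~\ref{lem:iteration}---is the classical alternative and is equally valid; it trades the sharp exponent trick for a routine iteration. Either way the constant depends on the same data.
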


\begin{proof}
We assume without loss of generality that $Q_R^\lambda$ is centered at the origin. 
Let $\eta\in C^1_0(B_R)$ with $\eta\equiv 1$ in $B_r$  and $|D\eta|\leq c
/(R-r)$ and $\tau\in C^1(\mr)$ with $\tau\equiv 0$ in $(-\infty,-R^2/\phi_2(\lambda)]$, $\tau\equiv 1$ in $[-r^2/\phi_2(\lambda),\infty)$ and $0\leq \tau'\leq \phi_2(\lambda)/(R-r)^2$.
Define
\[
\zeta(x,t):=\eta(x)^{q}\tau(t)^2 (u(x,t)-a)
\]
and $\sigma := -R^2/\phi_2(\lambda)$.
Using $\zeta$ as a test function, we have that for $s\in \Lambda^{\lambda}_{R}$,
\begin{equation}\label{weakform1}
\int_\sigma^s\int_{B_R}\left[\partial_t u_i  \zeta_i +A_i(x,t,u,Du) \cdot D\zeta_i\right]\, dx\,dt=0,\quad i=1,\dots,N,
\end{equation}
see Remark~\ref{rmkSteklov}, below. Since $\partial_t a=0$, we note that
\[\begin{split}
\int_\sigma^s\int_{B_R}\partial_t u_i  \zeta_i \, dx\,dt
&= \int_\sigma^s\int_{B_R}\frac12\partial_t [\eta^{q}\tau^2 (u_i-a_i)^2] - \eta_i^{q}\tau\tau' (u_i-a_i)^2 \, dx\,dt \\
&= \frac12 \int_{B_R} \eta^{q}\tau(s)^2 (u_i(s)-a_i)^2\,dx - \int_\sigma^s\int_{B_R} \eta^{q}\tau\tau' (u_i-a_i)^2 \, dx\,dt,
\end{split}\]
where $u_i(s)=u_i(x,s)$. 
Then, summing \eqref{weakform1} for $i=1,\dots,N$ and using \eqref{A-condition}, we have 
\[\begin{split}
&\frac12 \int_{B_R} \eta^{q}\tau(s)^2 |u(s)-a|^2\,dx + \nu\int_\sigma^s\int_{B_R} \eta^{q}\tau^2 \phi(|Du|)\, dx\,dt\\
& \leq   -q\int_\sigma^s\int_{B_R}\eta^{q-1}\tau^2
\sum_{i=1}^N(u_i-a_i) A_i(x,u,Du) \cdot D\eta\, dx\,dt 
+\int_\sigma^s\int_{B_R} \eta^{q}\tau\tau' |u-a|^2 \, dx\,dt\\
& \leq  c \int_\sigma^s\int_{B_R}\tau^2 \eta^{q-1}
\frac{\phi(|Du|)}{|Du|}\Big|\frac{u-a}{R-r}\Big| \, dx\,dt 
+\int_\sigma^s\int_{B_R}  \phi_2(\lambda)\Big|\frac{u-a}{R-r}\Big|^2\, dx\,dt.
\end{split}
\]
Moreover, by Young's inequality \eqref{young} and \eqref{phi*phi} and 
since $\phi^*$ satisfies \ainc{\frac{q}{q-1}} \cite[Proposition~2.4.13]{HarH_book}, we have that for any $\epsilon>0$ 
\[\begin{split}
\eta^{q-1}\frac{\phi(|Du|)}{|Du|}\Big|\frac{u-a}{R-r}\Big|
&\leq \epsilon \phi^*\left(\eta^{q-1}
\frac{\phi(|Du|)}{|Du|}\right) +c_\epsilon\phi\Big(\Big|\frac{u-a}{R-r}\Big|\Big)\\
&\leq c\epsilon \eta^q \phi (
|Du|) +c_\epsilon\phi\Big(\Big|\frac{u-a}{R-r}\Big|\Big).
\end{split}\]
We choose $\epsilon$ so small that the first term can be absorbed in the left-hand side. 
Therefore, combining the above inequalities and using the fact that  $\tau\equiv 1$ in 
$\Lambda^\lambda_r$ and $\eta\equiv 1$ in $B_r$, we have 
\[\begin{split}
\int_{B_r}  |u(s)-a|^2\,dx + &\int_{-\frac{r^2}{\phi_2(\lambda)}}^s\int_{B_r} \phi(|Du|)\, dx\,dt 
\leq c\int_{Q^\lambda_R}\left[\phi_2(\lambda)\Big|\frac{u-a}{R-r}\Big|^2+ \phi\Big(\Big|\frac{u-a}{R-r}\Big|\Big)\right]\, dz
\end{split}\]
for all $s\in \Lambda^{\lambda}_r$. Finally, we obtain the claim by dividing both 
sides by $r^{n+2}\phi_2(\lambda)$ and taking into consideration 
that $|B_r|\approx r^n$ while $|Q^\lambda_R| \approx r^{n+2}\phi_2(\lambda)$.  
\end{proof}

\begin{remark}\label{rmkSteklov}
When we consider parabolic problems and want to obtain useful estimates such as Caccioppoli inequalities,  we have to use test functions depending on the weak solution $u$ in the weak formulation \eqref{weakform}. However, the weak solution $u$ to the parabolic system \eqref{maineq} may not be differentiable in the time variable. In fact, we do not need differentiability with respect to the time variable when we prove the existence of weak solution to parabolic problems. In order overcome this difficulty, one way is to consider Steklov averages, see \cite{DiBenedetto_book} and also \cite{BaLin17}. However, since this argument is now quite standard, we shall abuse the notation 
$\partial_t u$ without further explanation.
\end{remark}

Let $\eta\in C_0^\infty(B_{\rho})$  be a cut-off function such that $0\leq \eta \leq 1$, $\eta\equiv 1$ in $B_{\rho/2}$, $|D\eta|\leq \frac 4\rho$.  We note that $\|\eta\|_{1}\approx |B_\rho|$. Define
$$
(u)_{\rho}^\lambda:= \frac{1}{\|\eta\|_1} \fint_{\Lambda_\rho^\lambda}\int_{B_\rho} u \eta \, dx\,dt
\quad\text{and}\quad
\langle u \rangle_\eta(s):= \frac{1}{\|\eta\|_1}\int_{B_\rho}u(x,s)\eta \, dx \ \ \text{for }s\in \Lambda^\lambda_\rho.
$$
Let us start with a complicated ``Sobolev--Poincar\'e'' inequality. 

\begin{lemma} \label{lem:poin1}
Let $u$ be a weak solution to \eqref{maineq} with \eqref{A-condition} and $Q^\lambda_{4\rho}\Subset\Omega_I$ with $\lambda>0$ and $\rho\le r<R\le  4\rho$.
For a weak $\Phi$-function $\psi$ satisfying \ainc{p_1} and \adec{q_1}, $1\le p_1\leq q_1$, we have
\[
\begin{split}
\fint_{Q^\lambda_{r}}\psi\bigg(\bigg|\frac{u-(u)^\lambda_\rho}{r}\bigg|\bigg)\,dz
&\le
c \psi(A_0) + 
c \psi\big(T(r,R)^\frac12\big)^{(1-\theta_0)} \fint_{Q_r^\lambda}\psi(|Du|)^{\theta_0} \,dz, 
\end{split}
\]
for some $c=c(n,N,p,q,p_1,q_1,\theta_0,L,\nu,\Lambda)>0$ provided that 
\[
\theta_0 p_1\in[1,n)
\quad\text{and}\quad
\frac{ n q_1}{n q_1 + 2p_1}\le \theta_0 \le 1.
\]
Here $A_0$ is from \eqref{eq:A0T0} with $U:=Q^\lambda_r$ and 
\begin{align*}
T(r,R)
:= 
\fint_{Q_{R}^\lambda}\bigg[ \bigg|\frac{u-(u)_\rho^\lambda}{R-r}\bigg|^2 + 
\frac1{\phi_2(\lambda)}\phi\bigg(\bigg|\frac{u-(u)_\rho^\lambda}{R-r}\bigg|\bigg)\bigg]\, dz + 
A_0^2,
\end{align*}
\end{lemma}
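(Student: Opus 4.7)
The strategy is to split $u-(u)^\lambda_\rho = v + w$, where
\[
v(x,t) := u(x,t) - \langle u\rangle_\eta(t), \qquad w(t) := \langle u\rangle_\eta(t) - (u)^\lambda_\rho,
\]
and estimate each piece separately. The piece $w$ is purely temporal and is controlled through the PDE, while $v$ is handled by a time-slice application of the Orlicz Gagliardo--Nirenberg inequality (Lemma~\ref{lem:GNOrlicz}).

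For $w$, I use a product test function $\zeta(x,t)=\eta(x)\alpha(t)/\|\eta\|_1$ in the weak formulation \eqref{weakform}, which yields
\[
\partial_t\langle u\rangle_\eta(t) = -\frac{1}{\|\eta\|_1}\int_{B_\rho}A(x,t,u,Du)\cdot D\eta(x)\,dx.
\]
The growth bound \eqref{A-condition} and $|D\eta|\le c/\rho$ give $|\partial_t\langle u\rangle_\eta|\le \frac{c}{\rho}\fint_{B_\rho}\phi_1(|Du|)\,dx$. Since $(u)^\lambda_\rho$ is the time-mean of $\langle u\rangle_\eta$ over $\Lambda^\lambda_\rho$, an interval of length $\approx \rho^2/\phi_2(\lambda)$, integration in time and the comparability $r\approx \rho$ give $|w(t)|\le c\,r\,A_0$ for every $t\in\Lambda^\lambda_r$, and \adec{q_1} of $\psi$ converts this into the contribution $c\,\psi(A_0)$.

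For $v$, I apply Lemma~\ref{lem:GNOrlicz} at each fixed time. The hypothesis $\theta_0 p_1\in[1,n)$ is used to run the lemma at the exponent $p=\theta_0 p_1$ (which lies in $[1,n)$), or equivalently to treat $\psi^{\theta_0}$ as a weak $\Phi$-function with Orlicz indices $(\theta_0 p_1,\theta_0 q_1)$; the parameters are chosen with $q_2=2$ so that, after raising to the appropriate power, the slice estimate takes the form
\[
\fint_{B_r}\psi(|v/r|)\,dx\le c\bigg(\fint_{B_r}\big[\psi(|Du|)^{\theta_0}+\psi(|v/r|)^{\theta_0}\big]\,dx\bigg)\psi\bigg(\Big(\fint_{B_r}|v/r|^2\,dx\Big)^{1/2}\bigg)^{1-\theta_0}.
\]
The balance condition $\frac{1}{\gamma}\ge\frac{\theta}{p^*}+\frac{(1-\theta)q_1}{q_2}$ of Lemma~\ref{lem:GNOrlicz} reduces under this parameter choice exactly to the stated lower bound $\theta_0\ge \frac{nq_1}{nq_1+2p_1}$. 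To convert the slice $L^2$-norm of $v/r$ into $T(r,R)^{1/2}$, I invoke the Caccioppoli inequality (Lemma~\ref{lem:caccio}) with $a=(u)^\lambda_\rho$, which yields $\sup_t \fint_{B_r}|(u-(u)^\lambda_\rho)/r|^2\,dx\le c\,T(r,R)$; combined with the pointwise bound $|w|\le c\,r\,T^{1/2}$ (which uses $A_0^2\le T$), one obtains $\sup_t \fint_{B_r}|v/r|^2\,dx\le cT$, so that the second factor above is dominated by $c\,\psi(T^{1/2})^{1-\theta_0}$ uniformly in time. Integrating over $\Lambda^\lambda_r$ and using a standard iteration on the radius (Lemma~\ref{lem:iteration}) together with Young's inequality to absorb the recursive term $\fint_{Q^\lambda_r}\psi(|v/r|)^{\theta_0}\,dz$ yields the claimed inequality.

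The main obstacle is the interpolation step: because $\theta_0$ may be below $1$ while Lemma~\ref{lem:GNOrlicz} requires $p\ge 1$, the classical Gagliardo--Nirenberg does not apply directly to $\psi$, and the correct reduction of the balance condition to $\theta_0\ge \frac{nq_1}{nq_1+2p_1}$ is obtained only by working with $\psi^{\theta_0}$ (equivalently, by choosing the exponent $p=\theta_0 p_1$), exploiting precisely the hypothesis $\theta_0 p_1\in[1,n)$. Care must also be taken in relating the space-time quantity $T$ to the time-slice $L^2$-norm, which is where Caccioppoli is needed alongside the definition of $T$.
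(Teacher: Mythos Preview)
Your overall structure is right and matches the paper: split $u-(u)^\lambda_\rho=v+w$, control $w$ through the equation to get $|w|\le c\,r\,A_0$, and handle $v$ via the Orlicz Gagliardo--Nirenberg inequality on time slices with parameters $(\psi^{1/p_1},p_1,\theta_0 p_1,q_1/p_1,2)$, then invoke Caccioppoli to bound $\sup_t\fint_{B_r}|v/r|^2\,dx$ by $T(r,R)$. The reduction of the balance condition to $\theta_0\ge\frac{nq_1}{nq_1+2p_1}$ is also correct.

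The gap is in your last step. After the slice Gagliardo--Nirenberg you are left with the term
\[
\psi\big(T^{1/2}\big)^{1-\theta_0}\fint_{Q^\lambda_r}\psi\big(|v/r|\big)^{\theta_0}\,dz,
\]
and you propose to absorb it by Young's inequality and the iteration Lemma~\ref{lem:iteration}. Neither works here. The recursive integral sits at the \emph{same} radius $r$ as the left-hand side, so there is no two-radius structure $Z(r)\le X+\theta Z(R)$ to iterate. And if you use Jensen plus Young directly, say
\[
\psi(T^{1/2})^{1-\theta_0}\Big(\fint_{Q^\lambda_r}\psi(|v/r|)\,dz\Big)^{\theta_0}
\le \tfrac12\fint_{Q^\lambda_r}\psi(|v/r|)\,dz + C\,\psi(T^{1/2}),
\]
you pick up an extra additive term $C\,\psi(T^{1/2})$ that is \emph{not} present in the stated inequality and cannot be absorbed into $c\,\psi(A_0)$ or into $c\,\psi(T^{1/2})^{1-\theta_0}\fint\psi(|Du|)^{\theta_0}$. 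This matters downstream: in the reverse H\"older argument that extra term would become $c\,\phi(\lambda)$ without a small factor, and the absorption into the left-hand side would fail.

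The fix used in the paper is different and cleaner: at each fixed time, apply a \emph{weighted Poincar\'e inequality} for the weak $\Phi$-function $\psi^{\theta_0}$ (this is exactly where the assumption $\theta_0 p_1\ge 1$ is used, to ensure $\psi^{\theta_0}$ satisfies \ainc{1}). Since $\langle u\rangle_\eta(t)$ is the $\eta$-weighted spatial mean of $u(\cdot,t)$, one gets directly
\[
\fint_{B_r}\psi\big(|v/r|\big)^{\theta_0}\,dx \le c\fint_{B_r}\psi(|Du|)^{\theta_0}\,dx,
\]
so the recursive term becomes the desired $\fint_{Q^\lambda_r}\psi(|Du|)^{\theta_0}\,dz$ with no residual and no iteration needed.
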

\begin{proof}
By the triangle inequality, 
\begin{equation}\label{lem:poin1:pf1}\begin{split}
\fint_{Q_r^\lambda}\psi\bigg(\bigg|\frac{u-(u)_\rho^\lambda}{r}\bigg|\bigg) \, dz
&=
\fint_{Q_r^\lambda}\psi\bigg(\bigg|\frac{u(z)-\langle u\rangle_\eta(t)
+\langle u\rangle_\eta(t)-(u)_\rho^\lambda}{r}\bigg|\bigg) \,dz \\
&\hspace{-2cm}\le
c\fint_{\Lambda_r^\lambda} \psi\bigg(\bigg| \frac{\langle u\rangle_{\eta}(t)-(u)_\rho^\lambda}r \bigg|\bigg) \,dt
+
c\fint_{Q_r^\lambda}\psi\bigg(\bigg|\frac{u(z)-\langle u\rangle_{\eta}(t)}{r}\bigg|\bigg) \,dz.  
\end{split}\end{equation}
We first take care of the first term. 
By the definition of $\langle u\rangle_\eta$ and using the weak formulation \eqref{weakform} with test-function $\zeta(x,t):=\eta(x)$, we find that for each $i=1,\dots, N$ and $t\in \Lambda_r^\lambda$,
\begin{equation}\label{uaverageueta}
\begin{split}
|\langle u_i\rangle_\eta(t) - (u_i)_\rho^\lambda|
&\le
\sup_{\sigma\in  \Lambda_r^\lambda } |\langle u_i\rangle_\eta(t)-\langle u_i\rangle_\eta(\sigma)| =
 \sup_{\sigma\in  \Lambda_r^\lambda }\bigg|\int^{t}_{\sigma} \partial_t \langle u_i\rangle_\eta(s)\, ds\bigg|  \\
&
= \sup_{\sigma\in  \Lambda_r^\lambda }\bigg|\int^{t}_{\sigma}\frac{1}{\|\eta\|_1}\int_{B_r}\partial_tu_i(x,s)\eta(x) \, dx\, ds\bigg|\\
&\approx \sup_{\sigma\in  \Lambda_r^\lambda } \bigg|\int^{t}_{\tau}\fint_{B_r} A_i(x,s,u,Du)\cdot D\eta\,dx\,ds\bigg|\\
&\leq \frac{cr}{\phi_2(\lambda)}\fint_{Q_r^\lambda} \phi_1 (|Du|)\,dz = r A_0.
\end{split}
\end{equation}
This gives the first term on the right-hand side of the claim.

We next use the Gagliardo--Nirenberg inequality (Lemma~\ref{lem:GNOrlicz}) with 
$(\psi,\gamma,p,q_1,q_2)$ given by $(\psi^{1/p_1}, p_1,\theta_0 p_1,\frac {q_1}{p_1},2)$
to conclude that  
\[
\fint_{B_r}\psi\big(\big|\tfrac{f}{r}\big|\big)\, dx
\le
c\fint_{B_r}\Big[\psi(|Df|)^{\theta_1}+\psi\big(\big|\tfrac{f}{r}\big|\big)^{\theta_0}\Big]\,dx
\ \psi\Bigg(\bigg[\fint_{B_r}\big|\tfrac{f}{r}\big|^2\,dx\bigg]^\frac12\Bigg)^{1-\theta_0}
\]
provided $\theta_0 p_1\in[1,n)$ and
\[
\frac1{p_1} \ge 
\frac{\theta_0}{(\theta_0 p_1)^*} + \frac{1-\theta_0}2 \frac{q_1}{p_1}
=
\frac{1}{p_1} - \frac{\theta_0}{n} + \frac{1-\theta_0}2 \frac{q_1}{p_1}.
\]
This can be written as $\theta_0\ge \frac{n q_1}{n q_1 + 2p_1}$.

The previous inequality for $f:=u-\langle u\rangle_\eta$ on each time slice gives 
\begin{equation}\label{lem:poin1:pf2}\begin{split}
\fint_{Q_r^\lambda}\psi\Big(\Big|\frac{u(z)-\langle u \rangle_\eta(t)}{r}\Big|\Big) \,dz
& = 
\fint_{\Lambda_r^\lambda} \fint_{B_r}\psi\big(\big|\tfrac{f(x,t)}{r}\big|\big) \,dx\,dt \\
&\hspace{-2cm} \le c
\fint_{Q_r^\lambda}[\psi(|Df|)^{\theta_0} + \psi(\tfrac fr)^{\theta_0}]\,dz \,
\psi\Bigg(\bigg( \sup_{t\in \Lambda_r^\lambda} \fint_{B_r}
\big|\tfrac{f(x,t)}{r}\big|^2\,dx\bigg)^\frac12\Bigg)^{1-\theta_0}.
\end{split}\end{equation}
In the first term we then use the following weighted Poincar\'e inequality for each time slice:
\[\begin{split}
\fint_{B_r}\psi\big(\big|\tfrac{f(x,t)}{r}\big|\big)^{\theta_0} \,dx
&=\fint_{B_r}\psi\Big(\Big|\frac{u(x,t)-\langle u\rangle_{\eta}(t)}{r}\Big|\Big)^{\theta_0} \,dx \\
& \leq c \fint_{B_r}\psi(|Du(x,t)|)^{\theta_0} \,dx
=c \fint_{B_r}\psi(|Df(x,t)|)^{\theta_0} \,dx,
\end{split}\] 
see \cite[Lemma~6.2.5 and Corollary~7.4.1(b)]{HarH_book} (here we need $\theta_0 p_1\ge 1$). 
Finally, from the Caccioppoli inequality (Lemma~\ref{lem:caccio}) 
and \eqref{uaverageueta} we conclude that 
\begin{align*}
\sup_{t\in \Lambda_r^\lambda} \fint_{B_r}\big|\tfrac{f(x,t)}{r}\big|^2 \,dx
&\le c \sup_{t\in \Lambda_r^\lambda} \fint_{B_r}\bigg|\frac{u(x,t)-(u)_\rho^\lambda}{r}\bigg|^2\,dx
+ c \sup_{t\in \Lambda_r^\lambda} \fint_{B_r}\bigg|\frac{(u)_\rho^\lambda-\langle u\rangle_{\eta}(t)}{r}\bigg|^2\,dx\\
&\le c
\fint_{Q_{R}^\lambda}\bigg[ \bigg|\frac{u-(u)_\rho^\lambda}{R-r}\bigg|^2 + 
\frac1{\phi_2(\lambda)}\phi\bigg(\bigg|\frac{u-(u)_\rho^\lambda}{R-r}\bigg|\bigg)\bigg]\, dz + 
cA_0^2. 
\end{align*}
Combining \eqref{lem:poin1:pf1}, \eqref{uaverageueta} and  \eqref{lem:poin1:pf2}, we obtain 
the claim.
\end{proof}

If we choose $\theta_0=p_1=1$ in the previous lemma, we obtain the following result, 
since the complicated term involving $T$ vanishes as its exponent is zero. 

\begin{corollary}[Poincar\'e inequality] \label{cor:poin1a}
Let $u$ be a weak solution to \eqref{maineq} with \eqref{A-condition} and $Q^\lambda_r\Subset\Omega_I$ with $\lambda>0$. 
For a weak $\Phi$-function $\psi$ satisfying \adec{q_1}  we have
\[
\fint_{Q^\lambda_{r}}\psi\bigg(\bigg|\frac{u-(u)^\lambda_\rho}{r}\bigg|\bigg)\,dz
\le
c\fint_{Q^\lambda_r}\psi(|Du|)\,dz + c \psi (A_0).
\]
\end{corollary}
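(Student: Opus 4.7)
The plan is to derive the corollary as an immediate specialization of Lemma~\ref{lem:poin1} by selecting the parameters $\theta_0 = p_1 = 1$. The motivation is transparent: in the conclusion of Lemma~\ref{lem:poin1}, the factor $\psi\big(T(r,R)^{1/2}\big)^{1-\theta_0}$ carries the interpolation quantity $T(r,R)$, and setting $\theta_0 = 1$ forces this factor to equal $1$, eliminating all dependence on $R$ and on the auxiliary $L^2$ and $\phi$ terms stored in $T$.

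Before invoking the lemma I would verify that the choice $(\theta_0,p_1) = (1,1)$ lies in the admissible range. Since $\psi$ is a weak $\Phi$-function it automatically satisfies \ainc{1}, so the extra \ainc{p_1} hypothesis of Lemma~\ref{lem:poin1} is granted free of charge with $p_1 = 1$. The restriction $\theta_0 p_1 \in [1,n)$ becomes $1 \in [1,n)$, which holds for $n\ge 2$. The threshold $\theta_0 \ge \frac{nq_1}{nq_1+2p_1}$ reads $\frac{nq_1}{nq_1+2} \le 1$, which is automatic. The ambient setup of Lemma~\ref{lem:poin1} requires $\rho \le r < R \le 4\rho$ and $Q^\lambda_{4\rho}\Subset\Omega_I$; interpreting the corollary so that the average $(u)^\lambda_\rho$ is taken on the same cylinder in which we integrate (namely $\rho = r$), it suffices to pick any $R \in (r,4r]$ with $Q^\lambda_{4r}\Subset\Omega_I$. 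The $R$-dependence drops out anyway.

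With $\theta_0 = p_1 = 1$ plugged in, the conclusion of Lemma~\ref{lem:poin1} collapses directly to
\[
\fint_{Q^\lambda_r}\psi\bigg(\bigg|\frac{u-(u)^\lambda_\rho}{r}\bigg|\bigg)\,dz
\le c\,\psi(A_0) + c\fint_{Q^\lambda_r}\psi(|Du|)\,dz,
\]
which is exactly the desired estimate. There is no substantive obstacle in this argument: the entire content is the observation that the extremal parameter choice $(\theta_0,p_1) = (1,1)$ is compatible with the hypotheses of Lemma~\ref{lem:poin1} and simultaneously kills the interpolation factor, leaving only the clean Poincaré-type bound.
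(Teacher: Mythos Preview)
Your proposal is correct and matches the paper's own justification, which consists solely of the remark preceding the corollary that choosing $\theta_0=p_1=1$ in Lemma~\ref{lem:poin1} makes the $T$-term vanish since its exponent is zero. Your explicit verification of the admissibility constraints is a welcome addition but does not depart from the paper's approach.
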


Over the course of the next two results we will show how the extra terms in 
the previous lemma can be estimated by suitable quantities when we are in suitable 
intrinsic cylinders.

\begin{lemma}\label{lem:2ndTerm}
We assume the assumptions of Lemma~\ref{lem:poin1}, and additionally that 
$$
\fint_{Q^\lambda_{4\rho}}\phi(|Du|)\, dz\leq  \phi(\lambda).
$$
Then,
for some $c=c(n,N,p,q,p_1,q_1,\theta_0,L,\nu,\Lambda)>0$, 
\[
\begin{split}
\fint_{Q^\lambda_{2\rho}}\psi\bigg(\bigg|\frac{u-(u)^\lambda_\rho}{\rho}\bigg|\bigg)\,dz
&\le
c \psi (A_0)
+
c \psi(\lambda)^{1-\theta_0} \fint_{Q_{2\rho}^\lambda}\psi(|Du|)^{\theta_0} \,dz,
\end{split}
\]
where $A_0$ is from \eqref{eq:A0T0} with $U:=Q^\lambda_{2\rho}$
\end{lemma}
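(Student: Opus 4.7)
The plan is to apply Lemma~\ref{lem:poin1} with $r = 2\rho$ and $R = 3\rho$ for the given $\psi$, and then reduce the claim to showing $T(2\rho, 3\rho) \le c\lambda^2$. By Lemma~\ref{lem:1stTerm} (applied to $U = Q^\lambda_{3\rho}$ using the intrinsic hypothesis), $A_0 \le c\lambda$, so $A_0^2 \le c\lambda^2$ inside $T$ and $\psi(A_0) \le c\psi(\lambda)$ for the outer term in the claim. Setting $w := u - (u)^\lambda_\rho$, the $\phi$-part of $T$ is handled by Corollary~\ref{cor:poin1a} with $\psi = \phi$ at scale $3\rho$: this gives $\fint_{Q^\lambda_{3\rho}} \phi(|w/(3\rho)|)\, dz \le c\phi(\lambda)$; rescaling by \adec{q} of $\phi$ from $1/(3\rho)$ to $1/\rho$ and dividing by $\phi_2(\lambda) = \phi(\lambda)/\lambda^2$ yields the bound $c\lambda^2$.

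The heart of the proof is bounding the $L^2$-part $\fint_{Q^\lambda_{3\rho}} |w/\rho|^2\, dz$ by $c\lambda^2$: a direct $\psi = t^2$ Poincar\'e fails in the singular regime $p < 2$, since $\fint |Du|^2$ is not controlled by $\lambda^2$. The fix is to reapply Lemma~\ref{lem:poin1} itself with $\psi(t) = t^2$ (so $p_1 = q_1 = 2$) at the critical exponent $\theta_0 = n/(n+2)$, which is admissible for all $n\ge 2$ since $2\theta_0 = 2_* := 2n/(n+2) \in [1,n)$ and $\theta_0 = nq_1/(nq_1 + 2p_1)$. For $\rho \le t < s \le 4\rho$ this gives
\[
Y(t) := \fint_{Q^\lambda_t}\Big|\tfrac{w}{t}\Big|^2 dz \le c A_0^2 + c\, T(t,s)^{2/(n+2)} \fint_{Q^\lambda_t} |Du|^{2_*}\, dz.
\]
The assumption \eqref{pq-range} gives $2_* < p$, so \ainc{p} combined with $\fint \phi(|Du|) \le \phi(\lambda)$ produces $\fint |Du|^p \le c\lambda^p$, and then Jensen's inequality (exponent $2_*/p < 1$) yields $\fint_{Q^\lambda_t} |Du|^{2_*}\, dz \le c\lambda^{2_*}$. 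Expanding $T(t,s)$, its $L^2$-part contributes $(s/(s-t))^2 Y(s)$ while its $\phi$-part is $\le c(s/(s-t))^q \lambda^2$ (by the first paragraph). Young's inequality on the term $(s/(s-t))^{4/(n+2)} Y(s)^{2/(n+2)} \lambda^{2_*}$ with conjugate exponents $\bigl(\tfrac{n+2}{2}, \tfrac{n+2}{n}\bigr)$, using the key identity $2_*\cdot\tfrac{n+2}{n} = 2$, produces
\[
Y(t) \le \epsilon\, Y(s) + c\lambda^2 + c_\epsilon\, \lambda^2 (4\rho)^\alpha \Big(\tfrac{1}{s-t}\Big)^\alpha, \qquad \alpha := \max\Big\{\tfrac{4}{n},\,\tfrac{2q}{n+2}\Big\}.
\]

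Since $u \in L^\infty(I, L^2_{\loc})$ makes $Y$ a priori bounded on $[3\rho, 4\rho]$, the iteration Lemma~\ref{lem:iteration} with $\theta := \epsilon < 1$ and the doubling function $X(\sigma) := c\lambda^2 + c_\epsilon \lambda^2 (4\rho\sigma)^\alpha$ gives $Y(3\rho) \le X(1/\rho) \le c\lambda^2$, whence $\fint_{Q^\lambda_{3\rho}} |w/\rho|^2\, dz = 9\, Y(3\rho) \le c\lambda^2$. Combining this with the two preceding bounds yields $T(2\rho, 3\rho) \le c\lambda^2$, so $\psi(T(2\rho,3\rho)^{1/2})^{1-\theta_0} \le c\, \psi(\lambda)^{1-\theta_0}$ via \ainc{p_1} and \adec{q_1} of $\psi$. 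Plugging these into the outer application of Lemma~\ref{lem:poin1} and absorbing a factor of $2^{q_1}$ by \adec{q_1} to pass from $|w/(2\rho)|$ on its LHS to $|w/\rho|$ finishes the proof. The main obstacle, characteristic of the general Orlicz setting with $p < 2 < q$, is exactly the $L^2$-part of $T$: it is resolved by the self-referential use of Lemma~\ref{lem:poin1} at the precisely critical $\theta_0 = n/(n+2)$, where the gradient term becomes $L^{2_*}$ (admissible under \eqref{pq-range}), coupled with Young's inequality and the parabolic iteration on $[3\rho, 4\rho]$.
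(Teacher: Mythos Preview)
Your proposal is correct and follows essentially the same approach as the paper's proof: reduce to $T(2\rho,3\rho)\le c\lambda^2$, handle the $\phi$-part via Corollary~\ref{cor:poin1a}, and obtain the $L^2$-bound by reapplying Lemma~\ref{lem:poin1} with $\psi(t)=t^2$ at the critical exponent $\theta_0=n/(n+2)$, followed by Young's inequality and the iteration Lemma~\ref{lem:iteration}. The only cosmetic differences are that the paper writes the iteration for $Z(r)=\fint_{Q^\lambda_r}|w/\rho|^2\,dz$ on the full range $[\rho,4\rho]$ (rather than your $Y(t)=\fint_{Q^\lambda_t}|w/t|^2$ on $[3\rho,4\rho]$, which is equivalent up to bounded factors), and that the paper explicitly notes the $A_0$-bound from Lemma~\ref{lem:1stTerm} is needed at every scale $r\in[\rho,4\rho]$, not just $r=3\rho$---you use this implicitly in your recursive inequality for general $t<s$, so be sure to state it.
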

\begin{proof} 
The claim follows once we show that $T$ from Lemma~\ref{lem:poin1} satisfies 
$T(2\rho,3\rho)\leq c\lambda$. 
We first note from Lemma~\ref{lem:1stTerm} with $U=Q^\lambda_r$ that
\begin{equation}\label{lem:1stTerm-1}
\frac{1}{\phi_2(\lambda)}\fint_{Q^\lambda_r}\phi_1 (|Du|)\,dz\leq c\phi(\lambda) \quad \text{for all }\ r\in[\rho,4\rho].
\end{equation}
Using this and Corollary~\ref{cor:poin1a} with $\psi:=\phi$, we find that for any $\rho\leq r<R\leq 4\rho$,
\[
\fint_{Q^\lambda_{R}}\phi\bigg(\bigg|\frac{u-(u)^\lambda_\rho}{R}\bigg|\bigg)\,dz
\le
c\fint_{Q^\lambda_R}\phi(|Du|)\,dz 
+ c \phi\bigg(\frac{1}{\phi_2(\lambda)}\fint_{Q^\lambda_R}\phi_1 (|Du|)\,dz\bigg) 
\le
c \phi(\lambda) 
\]
and hence
\[
\frac1{\phi_2(\lambda)}\fint_{Q^\lambda_{R}}\phi\bigg(\bigg|\frac{u-(u)^\lambda_\rho}{R-r}\bigg|\bigg)\,dz
\le
\big(\tfrac{R}{R-r}\big)^{q}
\frac1{\phi_2(\lambda)}\fint_{Q^\lambda_{R}}\phi\bigg(\bigg|\frac{u-(u)^\lambda_\rho}{R}\bigg|\bigg)\,dz
\le
c\big(\tfrac{R}{R-r}\big)^{q} \lambda^2. 
\]
With the previous inequalities, 
we have the following estimate for $T$ from Lemma~\ref{lem:poin1}: for any $\rho\leq r <R \leq 4\rho$,
\begin{equation}\label{eq:T}
T(r,R) 
\le
\big(\tfrac{R}{R-r}\big)^{2} \fint_{Q_{R}^\lambda} \bigg|\frac{u-(u)_\rho^\lambda}{R}\bigg|^2 \, dz + 
c \big(\tfrac{R}{R-r}\big)^{q} \lambda^2.
\end{equation}

Let $p_0:=\frac{2n}{n+2}$, $\psi(t):=t^2$ and $\theta_0:=\frac{p_0}2$. 
Then by Lemma~\ref{lem:Jensen} for the map $t\mapsto\phi(t^{1/p_0})$,
\[
\fint_{Q^\lambda_{r}}\psi(|Du|)^{\theta_0}\,dz
=
\fint_{Q^\lambda_{r}}|Du|^{p_0}\,dz
\le
c \phi^{-1}\bigg(\fint_{Q^\lambda_{r}}\phi(|Du|)\,dz\bigg)^{p_0}
\le
c\lambda^{p_0}
\]
for all $r\in[\rho,4\rho]$. With the last two estimates, \eqref{lem:1stTerm-1} and Young's inequality, 
Lemma~\ref{lem:poin1} gives us in this case that
\[\begin{split}
\fint_{Q^\lambda_r}\bigg|\frac{u-(u)^\lambda_\rho}{r}\bigg|^2\,dz
&\le
c \lambda^2
+c\lambda^{p_0}\bigg(\big(\tfrac{R}{R-r}\big)^{2}\fint_{Q_{R}^\lambda} \bigg|\frac{u-(u)_\rho^\lambda}{R}\bigg|^2 \, dz + 
\big(\tfrac{R}{R-r}\big)^{q} \lambda^2 \bigg)^{1-\theta_0}\\
&\le
c \big(\tfrac{R}{R-r}\big)^{\frac{2q}{n+2}} \lambda^{2}
+c\lambda^{p_0}\big(\tfrac{R}{R-r}\big)^{\frac{4}{n+2}}\bigg(\fint_{Q_{R}^\lambda} \bigg|\frac{u-(u)_\rho^\lambda}{R}\bigg|^2 \, dz\bigg)^\frac{2}{n+2}\\
&\le
\epsilon \fint_{Q_{R}^\lambda} \bigg|\frac{u-(u)_\rho^\lambda}{R}\bigg|^2 \, dz 
+  c \big(\tfrac{R}{R-r}\big)^{\frac{2q}{n+2}} \lambda^2
+ c_\epsilon \big(\tfrac{R}{R-r}\big)^{\frac{4}{n}} \lambda^{2}.
\end{split}\]
for any $\epsilon\in(0,1)$. Then, since $\rho\leq r<R\leq 4\rho$, we have
\[
\underbrace{\fint_{Q^\lambda_r}\bigg|\frac{u-(u)^\lambda_\rho}{\rho}\bigg|^2\,dz}_{=Z(r)}
\le 
\frac{1}{2}
\underbrace{\fint_{Q^\lambda_R}\bigg|\frac{u-(u)^\lambda_\rho}{\rho}\bigg|^2\,dz}_{=Z(R)}
 +
\underbrace{c \Big[\big(\tfrac{4\rho}{R-r}\big)^{\frac{2q}{n+2}} + \big(\tfrac{4\rho}{R-r}\big)^{\frac{4}{n}} \Big]\lambda^{2}}_{=X(1/(R-r))}.
\]
Therefore, by applying a standard iteration lemma~\ref{lem:iteration}, we 
obtain $Z(r) \le c X(1/(R-r))$, i.e.\ 
\[
\fint_{Q^\lambda_r}\bigg|\frac{u-(u)^\lambda_\rho}{\rho}\bigg|^2\,dz 
\le c \Big[\big(\tfrac{\rho}{R-r}\big)^{\frac{2q}{n+2}} + \big(\tfrac{\rho}{R-r}\big)^{\frac{4}{n}} \Big]\lambda^{2}
\]
for any $\rho\leq r<R\leq 4\rho$. 
We choose here $(r,R)=(3\rho,4\rho)$ and use it in \eqref{eq:T} with $(r,R)=(2\rho,3\rho)$ to conclude $T(2\rho,3\rho)\le c \lambda^2$.
\end{proof}

Now, we derive a reverse H\"older inequality.

\begin{lemma}\label{lem:reverse}
Let $u$ be a weak solution to \eqref{maineq} with \eqref{A-condition} and 
$Q^\lambda_{4\rho}\Subset\Omega_I$ with $\lambda,\rho>0$. Suppose that 
\begin{equation}\label{lemreverseass}
\phi(\lambda) \leq \fint_{Q_\rho^\lambda}\phi(|Du|)\, dz
\quad\text{and}\quad
\fint_{Q_{4\rho}^\lambda}\phi(|Du|)\, dz\leq \phi(\lambda).
\end{equation}
Then there exist $\theta=\theta(n,p,q)\in(0,1)$ and $c=c(n,N,p,q,L,\nu,\Lambda)>0$ such that
\[
\fint_{Q^\lambda_\rho}
\phi(|Du|)\,dz\leq c\bigg(\fint_{Q^\lambda_{4\rho}}\phi(|Du|)^{\theta}\,dz\bigg)^{\frac{1}{\theta}}.
\]
\end{lemma}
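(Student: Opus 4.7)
The plan is to combine the Caccioppoli inequality (Lemma~\ref{lem:caccio}) with the Sobolev--Poincar\'e estimate (Lemma~\ref{lem:2ndTerm}), in the spirit of Kinnunen--Lewis for the $p$-growth case, but adapted to the Orlicz setting using the tools just developed. First I would apply Lemma~\ref{lem:caccio} with $a:=(u)^\lambda_\rho$, $r:=\rho$, $R:=2\rho$ to obtain
\[
\fint_{Q^\lambda_\rho}\phi(|Du|)\,dz \le c\fint_{Q^\lambda_{2\rho}}\Bigl[\phi_2(\lambda)|w|^2 + \phi(|w|)\Bigr]\,dz,\qquad w:=\tfrac{u-(u)^\lambda_\rho}{\rho}.
\]
Since the upper balancing in \eqref{lemreverseass} is exactly the hypothesis of Lemma~\ref{lem:2ndTerm}, I can apply that lemma to both terms on the right: once with $\psi(t)=t^2$ (so $p_1=q_1=2$) for the quadratic part, and once with $\psi=\phi$ (so $p_1=p$, $q_1=q$) for the Orlicz part. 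A common exponent $\theta_0\in(0,1)$ satisfying both sets of admissibility constraints (in particular $\theta_0 p\ge 1$ and $\theta_0\ge\tfrac{nq}{nq+2p}$) exists thanks to $p>\tfrac{2n}{n+2}$. Combined with the bound $A_0\le\delta\lambda+c_\delta\phi^{-1}(\Theta_0)$ from Lemma~\ref{lem:1stTerm} (writing $\Theta_0:=(\fint_{Q^\lambda_{2\rho}}\phi(|Du|)^{\theta_0}\,dz)^{1/\theta_0}$), the bound $\Theta_0\le c\phi(\lambda)$ coming from Jensen and the upper balancing, and the inequality $\phi^{-1}(\Theta_0)\le L^{1/q}\lambda(\Theta_0/\phi(\lambda))^{1/q}$ deduced from \adec{q}, a Young's inequality yields $\phi(A_0)+\phi_2(\lambda)A_0^2\le\epsilon\phi(\lambda)+c_\epsilon\Theta_0$ for any $\epsilon>0$.

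The hard part will be the $L^2$-type residual $\phi(\lambda)\fint(|Du|/\lambda)^{2\theta_0}\,dz$ produced by the $\psi=t^2$ application, since the pointwise comparison $(|Du|/\lambda)^2\le c\phi(|Du|)/\phi(\lambda)$ genuinely fails in the regime $p<2<q$ that is the novelty of this paper: it breaks on $\{|Du|>\lambda\}$ when $p<2$ and on $\{|Du|\le\lambda\}$ when $q>2$. I would handle it by splitting $\{|Du|\le\lambda\}$ and $\{|Du|>\lambda\}$, applying \adec{q} on the former to get $(|Du|/\lambda)^{2\theta_0}\le L^{2\theta_0/q}(\phi(|Du|)/\phi(\lambda))^{2\theta_0/q}$ and \ainc{p} on the latter to get $(|Du|/\lambda)^{2\theta_0}\le L^{2\theta_0/p}(\phi(|Du|)/\phi(\lambda))^{2\theta_0/p}$; the resulting averages of $(\phi(|Du|)/\phi(\lambda))^\alpha$ are then controlled by Jensen's inequality for the concave $t^\alpha$ (with $\alpha\le 1$) together with the upper balancing $\fint\phi(|Du|)\le\phi(\lambda)$. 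After one more Young's inequality trading $\phi(\lambda)$-powers against $\Theta_0$-powers, the whole $L^2$ residual is bounded by $\epsilon\phi(\lambda)+c_\epsilon\bigl[\phi(\lambda)^{1-\theta_0}\Theta_0^{\theta_0}+\Theta_0\bigr]$.

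Assembling all the pieces yields
\[
\fint_{Q^\lambda_\rho}\phi(|Du|)\,dz\le c\epsilon\phi(\lambda)+c_\epsilon\bigl[\phi(\lambda)^{1-\theta_0}\Theta_0^{\theta_0}+\Theta_0\bigr].
\]
Using the lower balancing $\phi(\lambda)\le\fint_{Q^\lambda_\rho}\phi(|Du|)\,dz$ I absorb the $\epsilon$-term on choosing $\epsilon$ small; the Young inequality $\phi(\lambda)^{1-\theta_0}\Theta_0^{\theta_0}\le\tfrac12\phi(\lambda)+c\Theta_0$ then lets me absorb the mixed term as well, giving $\phi(\lambda)\le c\Theta_0$. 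Combined with $\fint_{Q^\lambda_\rho}\phi(|Du|)\,dz\le c\phi(\lambda)$ (from the upper balancing and the volume comparison $Q^\lambda_\rho\subset Q^\lambda_{4\rho}$), this delivers the reverse H\"older inequality with $\theta:=\theta_0\in(0,1)$; enlarging the domain on the right from $Q^\lambda_{2\rho}$ to $Q^\lambda_{4\rho}$ costs only a dimensional constant. The most delicate step is the Orlicz-to-$L^2$ reconciliation in the preceding paragraph, which is exactly where the proof departs from the classical $p$-growth argument and needs the full strength of the Orlicz Gagliardo--Nirenberg inequality (Lemma~\ref{lem:GNOrlicz}) that is built into Lemma~\ref{lem:2ndTerm}.
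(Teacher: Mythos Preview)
Your overall strategy matches the paper: Caccioppoli, then Lemma~\ref{lem:2ndTerm} applied once with $\psi=\phi$ and once with $\psi=t^2$, then absorb $\phi(\lambda)$ via the lower balancing. The $\phi$-term and the $A_0$-term go through as you describe. The gap is in the $L^2$ residual, precisely where you flagged the difficulty. After your split, on $\{|Du|>\lambda\}$ you arrive at
\[
\phi(\lambda)\fint_{\{|Du|>\lambda\}}\Big(\tfrac{|Du|}{\lambda}\Big)^{2\theta_0}\,dz \;\le\; c\,\phi(\lambda)^{1-\alpha}\fint_{\{|Du|>\lambda\}}\phi(|Du|)^{\alpha}\,dz,\qquad \alpha:=\tfrac{2\theta_0}{p}.
\]
To recover anything of the form $\epsilon\phi(\lambda)+c_\epsilon\Theta_0$ from this you need $\alpha\le 1$: otherwise, since $\phi(|Du|)>\phi(\lambda)$ on that set, the right-hand side dominates $\fint_{\{|Du|>\lambda\}}\phi(|Du|)\,dz$, which is \emph{more} integrability than the upper balancing provides, and there is nothing to interpolate or trade against. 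Thus $\theta_0\le p/2$. But your \emph{common} $\theta_0$ must also obey $\theta_0\ge 1/p$ and $\theta_0\ge \frac{nq}{nq+2p}$ from the $\psi=\phi$ application of Lemma~\ref{lem:2ndTerm}. The first pair already forces $p\ge\sqrt{2}$, violated for $n\le 4$ and $p$ close to $\tfrac{2n}{n+2}$; and $\frac{nq}{nq+2p}\le\frac p2$ fails whenever $p<2$ and $q$ is large. So the common-exponent route does not cover the full range \eqref{pq-range}. (A smaller point: ``concave Jensen plus upper balancing'' only yields $\le c\phi(\lambda)$ with a fixed constant; to bring $\Theta_0$ into the bound you would need H\"older interpolation between the exponents $\theta_0$ and $1$, not Jensen, but this still requires $\alpha\le 1$.)

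The paper avoids this by \emph{decoupling} the two exponents. For $\psi=t^2$ it takes the specific choice $\theta_0'=\tfrac{n}{n+2}=\tfrac{p_0}{2}$ (with $p_0:=\tfrac{2n}{n+2}$), so that Lemma~\ref{lem:2ndTerm} produces $\fint_{Q_{2\rho}^\lambda}|Du|^{p_0}\,dz$. It then multiplies the two estimates $\big(\fint|w|^2\big)^{1/2}\le c\lambda$ and $\big(\fint|w|^2\big)^{1/2}\le c_\delta(\fint|Du|^{p_0})^{1/p_0}+cA_0+\delta\lambda$, turning $\phi_2(\lambda)\fint|w|^2$ into $\phi_1(\lambda)$ times a first-order quantity. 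Young's inequality for $(\phi,\phi^*)$ together with \eqref{phi*phi} yields $c_\delta\phi\big((\fint|Du|^{p_0})^{1/p_0}\big)+c_\delta\phi(A_0)+c\delta\phi(\lambda)$, and finally Jensen's inequality for $t\mapsto\phi(t^{1/p_0})^{\theta_0}$ --- which satisfies \ainc{1} once $\theta_0\ge p_0/p$, always achievable strictly below $1$ since $p>p_0$ --- gives $\phi\big((\fint|Du|^{p_0})^{1/p_0}\big)\le c\Theta_0$. This route never needs $\theta_0\le p/2$, so it works across the whole range of $p$ and $q$.
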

\begin{proof} 
We denote $p_0 := \frac{2n}{n+2}$, and $A_0$ and $\Theta_0$ as in \eqref{eq:A0T0}
with $U:=Q^\lambda_{2\rho}$.
By the Caccioppoli inequality (Lemma~\ref{lem:caccio}) with $a:=(u)_\rho^\lambda$, 
we find that 
\begin{equation}\label{22}
\fint_{Q_\rho^\lambda} \phi(|Du|)\, dz 
\le
c \phi_2(\lambda) \fint_{Q^\lambda_{2\rho}}\bigg|\frac{u-(u)_{\rho}^\lambda}{\rho}\bigg|^2\,dz 
+ c \fint_{Q^\lambda_{2\rho}} \phi\bigg(\bigg|\frac{u-(u)_{\rho}^\lambda}{\rho}\bigg|\bigg)\, dz .
\end{equation}
We then estimate the last two integrals. 

By Lemma~\ref{lem:2ndTerm} for $\psi:=\phi$, considering also Lemma~\ref{lem:1stTerm} 
with $U=Q^\lambda_{2\rho}$
 and Young's inequality, we have that for any $\delta\in(0,1)$
\begin{equation}\label{22-1}
\fint_{Q^\lambda_{2\rho}}\phi\bigg(\bigg|\frac{u-(u)^\lambda_\rho}{\rho}\bigg|\bigg)\,dz
\le
c \phi(A_0) + c \phi(\lambda)^{(1-\theta_0)} \Theta_0^{\theta_0}
\le 
c_{\delta} \Theta_0 + c\delta \phi(\lambda). 
\end{equation}
Using the same steps in the case $\psi(t):=t^2$ and $\theta_0:=\frac{p_0}2$, 
we conclude that for any $\delta\in(0,1)$
\[\begin{split}
\bigg(\fint_{Q^\lambda_{2\rho}}\bigg|\frac{u-(u)^\lambda_\rho}{\rho}\bigg|^2\,dz\bigg)^{\frac{1}{2}}
&\le
c A_0
+ c \bigg(\lambda^{2-p_0} \fint_{Q^\lambda_{2\rho}}|Du|^{p_0} \,dz\bigg)^{\frac{1}{2}}\\
&\le c_\delta \bigg(\fint_{Q^\lambda_{2\rho}}|Du|^{p_0}\,dz\bigg)^\frac1{p_0} +c A_0 + \delta \lambda.
\end{split}\]
In particular, we also have 
\[
\bigg(\fint_{Q^\lambda_{2\rho}}\bigg|\frac{u-(u)^\lambda_\rho}{\rho}\bigg|^2\,dz\bigg)^{\frac{1}{2}}
\leq c \lambda.
\]
Multiplying the previous two inequalities, and using Young's inequality with 
\eqref{phi*phi} for the second step and  Lemma~\ref{lem:1stTerm}, 
we obtain that  for any $\delta\in (0,1)$
\begin{equation}\label{22-2}
\begin{split}
\phi_2(\lambda) \fint_{Q^\lambda_{2\rho}}\bigg|\frac{u-(u)^\lambda_\rho}{\rho}\bigg|^2\,dz
&\le
c\phi_1(\lambda) \bigg[c_\delta\bigg(\fint_{Q^\lambda_{2\rho}}|Du|^{p_0}\,dz\bigg)^\frac1{p_0} + A_0 + \delta \lambda \bigg] \\
&\le
c_\delta \phi\Bigg(\bigg(\fint_{Q^\lambda_{2\rho}}|Du|^{p_0}\,dz\bigg)^\frac1{p_0}\Bigg) + c_\delta\phi(A_0) + c \delta \phi(\lambda) \\
&\le
c_{\delta}\Theta_0 +c\delta \phi(\lambda),
\end{split}
\end{equation}
where the last step follows from Jensen's inequality (Lemma~\ref{lem:Jensen}) 
when $\theta_0\ge \frac{p_0}p$ 
so that $\phi(t^{1/p_0})^{\theta_0}$ satisfies \ainc{1}.

Finally, inserting \eqref{22-1} and \eqref{22-2} into \eqref{22}, we find that 
\[
\fint_{Q_\rho^\lambda} \phi(|Du|)\, dz \le c_\delta \Theta_0 + c \delta \phi(\lambda) .
\]
Choosing $\delta$ so small that $c\delta=\frac12$ and 
absorbing the term in the left-hand side by \eqref{lemreverseass} we have the reverse 
H\"older inequality.
\end{proof}


\section{Proof of higher integrability}\label{sec4}

Now we prove the main result, Theorem~\ref{mainthm}. 

\textit{Step 1.}
Let $\phi:[0,\infty)\to [0,\infty)$ 
be a weak $\Phi$-function and satisfy \ainc{p} and \adec{q} with \eqref{pq-range}. 
In view of Remark~\ref{rmkphiphic}, we can assume without loss 
of generality that $\phi$ is differentiable, strictly increasing and satisfies \ainc{p} with $L=1$. 
We also recall  
\begin{equation}\label{Dineq1}
\mathcal{D}(t):=\min\{t^2,\phi(t)^{\frac{n}{2}+1}t^{-n}\}=\min\{1,\phi_2(t)^{\frac{n+2}{2}}\}t^2
\end{equation}
from \eqref{D-def}. Then $\mathcal D$ is increasing and 
from \ainc{p} of $\phi$ we have 
\begin{equation} \label{Dineq2}
C^{\min\{2,\frac{p(n+2)-2n}{2}\}} \mathcal{D}(t)
=
\min\Big\{C^2, C^{\frac{p(n+2)-2n}{2}}\Big\} \mathcal{D}(t)
\le 
\mathcal{D}(Ct) 
\end{equation}
for all $t>0$ and $C\ge 1$.  

\textit{Step 2.}
Fix $Q_{2\rho}\Subset \Omega_I$. We define
\begin{equation}\label{lambda0}
\lambda_0:=\mathcal{D}^{-1}\bigg(\fint_{Q_{2\rho}}\phi(|Du|)\,dz\bigg)
\end{equation}
and, for $0<s\leq 2$ and $\lambda>0$, 
\[
E(s,\lambda):=\{z\in Q_{s\rho}:|Du(z)|>\lambda\}.
\]
We  next fix any $1\leq s_1<s_2\leq 2$ and any $\lambda$ satisfying   
\begin{equation}\label{lambda1}
\lambda \geq \lambda_1:= \Big(\frac{40}{s_2-s_1}\Big)^
{(n+2)\max\{\frac{1}{2}, \frac2{p(n+2)-2n}\} }\lambda_0.
\end{equation}
With this $\lambda$ we also define
\begin{equation}\label{rlambda}
r_\lambda:= 
\min\{1,\phi_2(\lambda)^{\frac12}\}(s_2-s_1)\rho.
\end{equation}
We notice that  $Q^{\lambda}_r(w)\subset Q_{s_2\rho}$ 
for $w\in E(s_1,\lambda)$ and $r\leq r_\lambda$. Then we prove a Vitali type covering 
of the super-level set $E(s_1,\lambda)$ satisfying a balancing condition on each set. 

\begin{lemma}
For each $1\leq s_1<s_2\leq 2$ and $\lambda\geq \lambda_1$, there exist $w_i\in E(s_1,\lambda)$ and $r_i\in(0,\frac{r_\lambda}{20})$, $i=1,2,3,\dots$, such that  $Q^\lambda_{4r_i}(w_i)$ are mutually disjoint,
\begin{equation}\label{stop0}
E(s_1,\lambda)\setminus N\ \subset\ \bigcup_{i=1}^\infty Q^\lambda_{20r_i}(w_i) 
\end{equation}
for some a Lebesgue measure zero set $N$,
\begin{equation}\label{stop1}
\fint_{Q^\lambda_{r_i}(w_i)}\phi(|Du|)\,dz=\phi(\lambda)
\quad\text{and}\quad
\fint_{Q^\lambda_{r}(w)}\phi(|Du|)\,dz\le \phi(\lambda) \ \ \ \text{for all }r\in(r_i,r_\lambda).
\end{equation}
\end{lemma}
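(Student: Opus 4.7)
The plan is a parabolic stopping-time plus Vitali covering argument tailored to the intrinsic cylinders $Q^\lambda_r$. For each Lebesgue point $w$ of $\phi(|Du|)$ lying in $E(s_1,\lambda)$ I want to identify the largest radius $r_w$ at which the intrinsic average of $\phi(|Du|)$ still attains the level $\phi(\lambda)$, and then thin out the resulting family by the standard $5r$-covering lemma so that the $4r$-cylinders become pairwise disjoint while the $20r$-cylinders still cover the super-level set.

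The quantitative heart of the argument is the upper bound at large radii. For $w \in E(s_1,\lambda) \subset Q_{s_1\rho}$ and $r \in [r_\lambda/20, r_\lambda]$, the excerpt records $Q^\lambda_r(w) \subset Q_{s_2\rho} \subset Q_{2\rho}$, so trivial domain enlargement yields
\[
\fint_{Q^\lambda_r(w)}\phi(|Du|)\,dz \le \frac{|Q_{2\rho}|}{|Q^\lambda_r(w)|}\fint_{Q_{2\rho}}\phi(|Du|)\,dz = \frac{(2\rho)^{n+2}\phi_2(\lambda)}{r^{n+2}}\,\mathcal D(\lambda_0).
\]
Specializing to $r=r_\lambda/20$ and invoking \eqref{rlambda} together with the identity $\phi(\lambda)/\mathcal D(\lambda) = \phi_2(\lambda)/\min\{1,\phi_2(\lambda)^{(n+2)/2}\}$, the right-hand side becomes $(40/(s_2-s_1))^{n+2}\,\mathcal D(\lambda_0)\mathcal D(\lambda)^{-1}\phi(\lambda)$. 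Writing $\alpha := \min\{2,(p(n+2)-2n)/2\}$ one checks that $\max\{\tfrac12, \tfrac{2}{p(n+2)-2n}\} = 1/\alpha$, so \eqref{lambda1} says $\lambda \ge (40/(s_2-s_1))^{(n+2)/\alpha}\lambda_0$, and \eqref{Dineq2} then delivers $\mathcal D(\lambda) \ge (40/(s_2-s_1))^{n+2}\mathcal D(\lambda_0)$. Consequently the average at $r_\lambda/20$ is bounded by $\phi(\lambda)$.

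With this bound in hand the stopping-radius construction is standard. Lebesgue differentiation gives, outside a null set $N$,
\[
\lim_{r\to 0^+}\fint_{Q^\lambda_r(w)}\phi(|Du|)\,dz = \phi(|Du(w)|) > \phi(\lambda),
\]
so the number
\[
r_w := \sup\Big\{r \in (0, r_\lambda) : \fint_{Q^\lambda_r(w)}\phi(|Du|)\,dz > \phi(\lambda)\Big\}
\]
is well-defined and lies in $(0, r_\lambda/20]$; the strict inclusion $r_w < r_\lambda/20$ is obtained by a harmless enlargement of the constant $40$ in \eqref{lambda1} to produce strict inequality at scale $r_\lambda/20$. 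Continuity in $r$ of the averaged integral then forces equality $= \phi(\lambda)$ at $r=r_w$ and $\le \phi(\lambda)$ for all $r \in (r_w, r_\lambda)$, which is exactly \eqref{stop1}.

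Finally, for fixed $\lambda$ the cylinders $Q^\lambda_{4r_w}(w)$ share a common anisotropy and have uniformly bounded radii $4r_w \le r_\lambda/5$, so the standard Vitali $5r$-covering lemma applies and extracts a countable, pairwise disjoint subfamily $\{Q^\lambda_{4r_i}(w_i)\}_{i\ge 1}$ whose fivefold enlargements $Q^\lambda_{20r_i}(w_i)$ still cover $\bigcup_w Q^\lambda_{4r_w}(w)$, hence cover $E(s_1,\lambda)\setminus N$, giving \eqref{stop0}. The one step carrying all the real content is the calibration in Step~1 — in particular the case analysis $\phi_2(\lambda)\lessgtr 1$ that is packaged by the single scaling exponent $\alpha$ from \eqref{Dineq2}, which is precisely what dictates the awkward exponent $(n+2)\max\{1/2,2/(p(n+2)-2n)\}$ in the definition of $\lambda_1$; the remaining ingredients (Lebesgue differentiation, continuity in $r$, and the metric Vitali lemma) are routine.
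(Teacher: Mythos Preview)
Your argument is correct and follows the same approach as the paper: bound the intrinsic average at scales $r\in[r_\lambda/20,r_\lambda)$ by $\phi(\lambda)$ via the calibration of $\lambda_1$ against $\mathcal D$, use Lebesgue differentiation at small radii, pick the stopping radius by continuity, and then apply Vitali. Your observation that $1/\alpha=\max\{1/2,2/(p(n+2)-2n)\}$ explains the exponent in \eqref{lambda1} is exactly the computation the paper carries out, and your remark about the borderline case $r_w=r_\lambda/20$ is a minor technicality the paper simply glosses over.
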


\begin{proof}
For $w\in E(s_1,\lambda)$ and $r\in[\frac{r_\lambda}{20},r_\lambda)$, using \eqref{lambda0} we have
$$
\fint_{Q^\lambda_{r}(w)}\phi(|Du|)\,dz  \leq \frac{|Q_{2\rho}|}{|Q^\lambda_{r}|} \fint_{Q_{2\rho}}\phi(|Du|)\,dz \leq \frac{|Q_{2\rho}|}{|Q_{r_\lambda/20}|}\phi_2(\lambda)\mathcal{D}(\lambda_0).
$$
By \eqref{rlambda}, \eqref{Dineq2}, \eqref{lambda1} and \eqref{Dineq1}, 
%
\[\begin{split}
\frac{|Q_{2\rho}|}{|Q_{r_\lambda/20}|}\phi_2(\lambda)\mathcal{D}(\lambda_0) 
& \leq 
\bigg(\frac{40}{s_2-s_1}\bigg)^{n+2} 
\max\{1,\phi_2(\lambda)^{-\frac{n+2}{2}}\} \phi_2(\lambda)\mathcal{D}(\lambda_0)\\
&\leq \mathcal{D}(\lambda)  \max\{1,\phi_2(\lambda)^{-\frac{n+2}{2}}\} \phi_2(\lambda) \\
&= \min\{1,\phi_2(\lambda)^{\frac{n+2}{2}}\}\lambda^2\max\{1,\phi_2(\lambda)^{-\frac{n+2}{2}}\} \phi_2(\lambda)
= \phi(\lambda).
\end{split}\]
Therefore we obtain that
\[
\fint_{Q^\lambda_{r}(w)}\phi(|Du|)\,dz\leq \phi(\lambda)\quad \text{for all }
\ r\in[\tfrac{r_\lambda}{20},r_\lambda). 
\]
On the other hand, by Lebesgue's differentiation theorem we see that for almost every $w\in E(s_,\lambda)$ 
\[
\lim_{r\to0^+}\fint_{Q^\lambda_{r}(w)}\phi(|Du|)\,dz = \phi(|Du(w)|)> \phi(\lambda).
\]
Therefore, since the map $r\mapsto \fint_{Q^\lambda_{r}(w)}\phi(|Du|)\,dz$ is continuous, one can find $r_w\in (0,\frac{r_\lambda}{20})$ such that
\[
\fint_{Q^\lambda_{r_w}(w)}\phi(|Du|)\,dz=\lambda,
\quad\text{and}\quad
\fint_{Q^\lambda_{r}(w)}\phi(|Du|)\,dz\le \lambda\quad \text{for all }\ r\in(r_w,r_\lambda].
\]
Consequently, applying Vitali's covering lemma for $\{Q^\lambda_{r_w}(w)\}$, we have the conclusion.
\end{proof}

\textit{Step 3.}
By \eqref{stop1}, we can apply Lemma~\ref{lem:reverse}, so that we have that for sufficiently small $\delta\in(0,1)$,
\[\begin{split}
\phi(\lambda)
&= \fint_{Q^\lambda_{r_i}(w_i)}\phi(|Du|)\,dz \leq   c\bigg(\fint_{Q^\lambda_{4r_i}(w_i)}\phi(|Du|)^{\theta}\,dz\bigg)^{\frac{1}{\theta}}\\
& \leq c \phi(\delta\lambda)  +c\bigg(\frac{1}{|Q^\lambda_{4r_i}|}\int_{Q^\lambda_{4r_i}(w_i)\cap E(s_2,\delta\lambda)}\phi(|Du|)^{\theta}\,dz\bigg)^{\frac{1}{\theta}}\\
& \leq c \delta^{p} \phi(\lambda) + \frac{c}{|Q^\lambda_{4r_i}|}\int_{Q^\lambda_{4r_i}(w_i)\cap E(s_2,\delta\lambda)}\phi(|Du|)^{\theta}\,dz \,\bigg(\fint_{Q^\lambda_{4r_i}}\phi(|Du|)\,dz\bigg)^{1-\theta}\\
& \leq \frac12 \phi(\lambda)+c\frac{\phi(\lambda)^{1-\theta}}{|Q^\lambda_{4r_i}|}\int_{Q^\lambda_{4r_i}(w_i)\cap E(s_2,\delta\lambda)}\phi(|Du|)^{\theta}\,dz .
\end{split}\]
Then we absorb $\phi(\lambda)$ into the left-hand side. 
Using \eqref{stop1} again we have   
$$
\fint_{Q^\lambda_{20r_i}(w_i)}\phi(|Du|)\,dz 
\le \phi(\lambda) 
\leq c\frac{\phi(\lambda)^{1-\theta}}{|Q^\lambda_{4r_i}|}\int_{Q^\lambda_{4r_i}(w_i)\cap E(s_2,\delta\lambda)}\phi(|Du|)^{\theta}\,dz 
$$
and so
$$
\int_{Q^\lambda_{20r_i}(w_i)}\phi(|Du|)\,dz \leq c\phi(\lambda)^{1-\theta}\int_{Q^\lambda_{4r_i}(w_i)\cap E(s_2,\delta\lambda)}\phi(|Du|)^{\theta}\,dz .
$$
Therefore, since $\{Q^\lambda_{20r_i}(w_i)\}$ is a covering of  $E(s_1,\lambda)$ according to 
\eqref{stop0} and $Q^\lambda_{4r_i}(w_i)$ are mutually disjoint,
\[
\int_{E(s_1,\lambda)}\phi(|Du|)\,dz   \leq \sum_{i=1}^\infty \int_{Q^\lambda_{20r_i}(w_i)}\phi(|Du|)\,dz\leq c\phi(\lambda)^{1-\theta}\int_{E(s_2,\delta\lambda)}\phi(|Du|)^{\theta}\,dz.
\]
In addition, 
\[\begin{split}
\int_{E(s_1,\delta\lambda)\setminus E(s_1,\lambda)}\phi(|Du|)\,dz &\leq  \int_{E(s_1,\delta\lambda)\setminus E(s_1,\lambda)}\phi(\lambda)^{1-\theta} \phi(|Du|)^{\theta}\,dz\\
&\leq  \int_{E(s_2,\delta\lambda)}\phi(\lambda)^{1-\theta} \phi(|Du|)^{\theta}\,dz. 
\end{split}\]
Combining these and replacing $\delta\lambda$ by $\lambda$, we have 
\begin{equation}\label{15}
\int_{E(s_1,\lambda)}\phi(|Du|)\,dz\leq  c\int_{E(s_2,\lambda)}\phi(\lambda)^{1-\theta} \phi(|Du|)^{\theta}\,dz\ \ \ \text{for all }\lambda>\delta\lambda_1.
\end{equation}

\textit{Step 4.}
Let us set 
$$
|Du|_k:=\min\{|Du|,k\} \ \ \ \text{for }k\geq0,
$$
$$
E_k(s,\lambda):=\{z\in Q_{s\rho}:|Du|_k(z)>\lambda\}.
$$
 From now on, we assume that $k>\lambda_1$. Then we have from \eqref{15} that for  $\epsilon>0$, which will be determined later,
\[\begin{split}
I&:=\int_{\lambda_1}^\infty \phi(\lambda)^{\epsilon-1}\phi'(\lambda)\int_{E_k(s_1,\lambda)}\phi(|Du|_k)^{1-\theta} \phi(|Du|)^{\theta}\,dz\,d\lambda \\
&\leq \int_{\lambda_1}^k \phi(\lambda)^{\epsilon-1}\phi'(\lambda)\int_{E(s_1,\lambda)} \phi(|Du|)\,dz\,d\lambda \\
&\leq  c\int_{\lambda_1}^\infty \int_{E_k(s_2,\lambda)}\phi(\lambda)^{\epsilon-\theta}\phi'(\lambda) \phi(|Du|)^{\theta}\,dz\,d\lambda =:I\!I,
\end{split}\]
where we have used the facts that $E_k(s,\lambda)=\emptyset$ if $\lambda>k$ and $E(s,\lambda)=E_k(s,\lambda)$ if $
 \lambda\leq k$.  We then apply Fubini's theorem to $I$ and $I\!I$, so that
\[\begin{split}
 I &= \int_{E_k(s_1,\lambda_1)} \phi(|Du|_k)^{1-\theta} \phi(|Du|)^{\theta} \int_{\lambda_1}^{|Du|_k} \phi(\lambda)^{\epsilon-1}\phi'(\lambda)\, d\lambda\, dz\\
 &= \frac{1}{\epsilon} \int_{E_k(s_1,\lambda_1)} \left[\phi(|Du|_k)^{1-\theta+\epsilon} \phi(|Du|)^{\theta}-\phi(\lambda_1)^{\epsilon}\phi(|Du|_k)^{1-\theta}\phi(|Du|)^{\theta}\right]\,dz
\end{split}\]
and
\[\begin{split}
I\!I&= c\int_{E_k(s_2,\lambda_1)}\phi(|Du|)^{\theta}  \int_{\lambda_1}^{|Du|_k} \phi(\lambda)^{\epsilon-\theta}\phi'(\lambda) \,d\lambda \,dz\\
&= \frac{c}{1-\theta+\epsilon}\int_{E_k(s_2,\lambda_1)}  \left(\phi(|Du|_k)^{1-\theta+\epsilon} -\phi(\lambda_1)^{1-\theta+\epsilon}\right)\phi(|Du|)^{\theta} \, dz\\
&\leq \frac{c}{1-\theta}\int_{E_k(s_2,\lambda_1)} \phi(|Du|_k)^{1-\theta+\epsilon}   \phi(|Du|)^{\theta} \, dz.
\end{split}\]  
Therefore we have
  \[\begin{split}
\int_{E_k(s_1,\lambda_1)} \phi(|Du|_k)^{1-\theta+\epsilon} \phi(|Du|)^{\theta}\,dz &\leq   \phi(\lambda_1)^{\epsilon} \int_{E_k(s_1,\lambda_1)} \phi(|Du|_k)^{1-\theta} \phi(|Du|)^{\theta}\,dz \\
&\qquad +  c\epsilon \int_{Q_{s_2\rho}}\phi(|Du|_k)^{1-\theta+\epsilon} \phi(|Du|)^{\theta}   \, dz.
\end{split}\]  
At this stage, we choose $\epsilon=\epsilon(n,N,p,q,L,\nu,\Lambda)>0$ so small that 
$c\epsilon\leq \frac12$. On the other hand,   
\[
\int_{Q_{s_1\rho}\setminus E_k(s_1,\lambda_1)} \phi(|Du|_k)^{1-\theta+\epsilon} \phi(|Du|)^{\theta}\,dz\leq \phi(\lambda_1)^{\epsilon}\int_{Q_{s_1\rho}} \phi(|Du|_k)^{1-\theta} \phi(|Du|)^{\theta}\,dz.
\]
Combining the last two estimates, we have 
  \[\begin{split}
\int_{Q_{s_1\rho}} \phi(|Du|_k)^{1-\theta+\epsilon} \phi(|Du|)^{\theta}\,dz &\leq \frac12 \int_{Q_{s_2\rho}}\phi(|Du|_k)^{1-\theta+\epsilon} \phi(|Du|)^{\theta}   \, dz \\
& +  \frac{c\phi(\lambda_0)^{\epsilon} }{(s_2-s_1)^{\alpha_0}}\int_{Q_{2\rho}} \phi(|Du|_k)^{1-\theta} \phi(|Du|)^{\theta}\,dz,
\end{split}\]  
where $\alpha_0:=\epsilon q(n+2)\max\{\frac{1}{2},\frac{2}{p(n+2)-2n}\}$; here we used \eqref{lambda1} which yields $\phi(\lambda_1)\leq \frac{c}{(s_2-s_1)^{\alpha_0}}\phi(\lambda_0)$.
Applying the standard iteration lemma~\ref{lem:iteration} to this inequality, we find that 
\[\begin{split}
\int_{Q_{\rho}} \phi(|Du|_k)^{1-\theta+\epsilon} \phi(|Du|)^{\theta}\,dz 
&\leq c\phi(\lambda_0)^{\epsilon} \int_{Q_{2\rho}} \phi(|Du|_k)^{1-\theta} \phi(|Du|)^{\theta}\,dz\\
&\leq c\phi(\lambda_0)^{\epsilon} \int_{Q_{2\rho}} \phi(|Du|)\,dz,
\end{split}\]
Finally, letting $k\to \infty$ and recalling \eqref{lambda0}, we have 
\[\begin{split}
\fint_{Q_{\rho}}  \phi(|Du|)^{1+\epsilon}\,dz & \leq c\phi(\lambda_0)^{\epsilon } \fint_{Q_{2\rho}} \phi(|Du|)\,dz\\
&\leq  c \bigg[ \phi \bigg(\mathcal{D}^{-1}\bigg(\fint_{Q_{2\rho}}\phi(|Du|)\,dz\bigg)\bigg)\bigg]^\epsilon\fint_{Q_{2\rho}}\phi(|Du|)\,dz.
\end{split}\]




\bibliographystyle{amsplain}

\end{document}